\colorlet{darkblue}{blue!50!black}
\newcommand{\p}{\partial}
\newcommand{\e}{\varepsilon}
\newcommand{\R}{{\mathbb R}}
\newcommand{\Z}{{\mathbb Z}}
\newcommand{\dd}{{\textup d}}
\newcommand{\RR}{{\cal R}}
\newcommand{\XX}{{\cal X}}
\newcommand{\supp}{\mathop{\rm supp}\nolimits}
\newcommand{\esssup}{\mathop{\rm ess\,sup}\limits}
\theoremstyle{plain}
\newtheorem{theorem}{Theorem}[section]
\newtheorem{lemma}[theorem]{Lemma}
\newtheorem{proposition}[theorem]{Proposition}
\newtheorem{corollary}[theorem]{Corollary}
\theoremstyle{definition}
\newtheorem{definition}[theorem]{Definition}
\theoremstyle{remark}
\newtheorem{remark}[theorem]{Remark}
\numberwithin{equation}{section}
\begin{document}
\title{Global exponential stabilisation for the Burgers equation with localised control}
\author{Armen Shirikyan\footnote{Department of Mathematics, University of Cergy--Pontoise, CNRS UMR 8088, 2 avenue Adolphe Chauvin, 95302 Cergy--Pontoise, France; e-mail: \href{mailto:Armen.Shirikyan@u-cergy.fr}{Armen.Shirikyan@u-cergy.fr}}}

\date{}
\maketitle
\begin{abstract}
We consider the 1D viscous Burgers equation with a control localised in a finite interval. It is proved that, for any $\e>0$, one can find a time~$T$ of order $\log\e^{-1}$ such that any initial state can be steered to the $\e$-neighbourhood of a given trajectory at time~$T$. This property combined with an earlier result on local exact controllability shows that the Burgers equation is globally exactly controllable to trajectories in a finite time. We also prove that the approximate controllability to arbitrary targets does not hold even if we allow infinite time of control. 

\smallskip
\noindent
{\bf AMS subject classifications:} 35L65, 35Q93, 93C20 

\smallskip
\noindent
{\bf Keywords:} Burgers equation, exponential stabilisation, localised control, Harnack inequality
\end{abstract}

\tableofcontents

\setcounter{section}{-1}

\section{Introduction}
\label{s1}
Let us consider the controlled Burgers equation on the interval $I=(0,1)$ with the Dirichlet boundary condition:
\begin{align}
\p_tu-\nu\p_x^2u+u\p_xu&=h(t,x)+\zeta(t,x), \label{1}\\ 
u(t,0)=u(t,1)&=0. \label{2}
\end{align}
Here $u=u(t,x)$ is an unknown function, $\nu>0$ is a parameter, $h$ is a fixed function, and~$\zeta$ is a control that is assumed to be localised in an interval $[a,b]\subset I$. As is known, the initial-boundary value problem for~\eqref{1} is well posed. Namely, if $h\in L_{\mathrm{loc}}^2(\R_+,L^2(I))$ and $\zeta\equiv0$, then, for any $u_0\in L^2(I)$, problem~\eqref{1}, \eqref{2} has a unique solution $u(t,x)$ that belongs to the space 
$$
\XX=\{u\in L_{\mathrm{loc}}^2(\R_+,H_0^1(I)):
\p_tu\in L_{\mathrm{loc}}^2(\R_+,H^{-1}(I))\}
$$
and satisfies the initial condition
\begin{equation} \label{3}
u(0,x)=u_0(x);
\end{equation}
see the end of this Introduction for the definition of functional spaces. Let us denote by~$\RR_t(u_0,h)$ the mapping that takes the pair~$(u_0,h)$ to the solution~$u(t)$ (with $\zeta\equiv0$). We wish to study the problem of controllability for~\eqref{1}. This question received great deal of attention in the last twenty years, and we now recall some  achievements related to our paper. 

One of the first results was obtained by Fursikov and Imanuvilov~\cite{FI-1995,FI1996}. They established the following two properties: 

\medskip
{\bf Local exact controllability.} {\sl Let $\hat u(t,x)$ be a trajectory of~\eqref{1}, \eqref{2} with $\zeta\equiv0$ and let $T>0$. Then there is $\e>0$ such that, for any $u_0\in H_0^1(I)$ satisfying the inequality $\|u_0-\hat u(0)\|_{H^1}\le\e$, one can find a control\footnote{We denote by~$J_T$ the time interval $[0,T]$.} $\zeta\in L^2(J_T\times I)$ supported in $J_T\times [a,b]$ for which $\RR_T(u_0,h+\zeta)=\hat u(T)$. Moreover, when~$T$ is fixed, the number~$\e$ can be chosen to be the same for all~$\hat u(0)$ and~$h$ varying in bounded subsets of the spaces~$H_0^1(I)$ and~$L^2(J_T\times I)$, respectively.}

\smallskip
{\bf Absence of approximate controllability.} 
{\sl For any $u_0\in L^2(I)$ and any positive numbers~$T$ and~$R$, one can find $\hat u\in L^2(I)$ such that, for any control~$\zeta\in L^2(J_T\times I)$ supported by $J_T\times [a,b]$, we have} 
\begin{equation} \label{0.4}
\|\RR_T(u_0,h+\zeta)-\hat u\|\ge R.
\end{equation}

\medskip
\noindent
These results were extended and developed in many works. In particular, Glass and Guererro~\cite{GG-2007}  and~\cite{leautaud-2012} proved global exact boundary controllability to constant states, Coron~\cite{coron-2007} and~Fern\'andez-Cara--Guererro~\cite{FG-2007} established some estimates for the time and cost of control, and Chapouly~\cite{chapouly-2009} (see also Marbach~\cite{marbach-2014}) proved global exact controllability to trajectories with two boundary and one distributed controls, provided that $h\equiv0$.  A large number of works were devoted to the investigation of similar question for other, more complicated equations of fluid mechanics; see the references in~\cite{fursikov2000,coron2007}.

\smallskip
In view of the above-mentioned properties, two natural questions arise: 
\begin{itemize}
\item
does the {\it exact controllability to trajectories\/} hold  for arbitrary initial conditions and nonzero right-hand sides\,?  
\item
does the {\it approximate controllability\/} hold if we allow a sufficiently large time of control\,?
\end{itemize}
It turns out that the answer to the first question is positive, provided that the time of control is sufficiently large, whereas the answer to the second question is negative. Namely, the main results of this paper combined with the above-mentioned property of local exact controllability to trajectories imply the following theorem.\footnote{See the Notation below for definition of the spaces used in the statement.}

\medskip
{\bf Main Theorem.}
{\it Suppose that $\nu>0$, $h\in (H_{\mathrm{ul}}^1\cap L^\infty)(\R_+\times I)$, and $[a,b]\subset I$. Then the following two assertions hold.
\begin{itemize}
\item[\bf(a)]
There exists $T>0$ such that, for any $u_0,\hat u_0\in L^2(I)$ one can find a control $\zeta\in L^2(J_T\times I)$ supported by $J_T\times [a,b]$ for which
\begin{equation} \label{11}
\RR_T(u_0,h+\zeta)=\RR_T(\hat u_0,h). 
\end{equation}
\item[\bf(b)] 
For any positive numbers~$T_0$ and~$R$, one can find $\hat u\in L^2(I)$ such that, for any $u_0\in L^2(I)$ and any control $\zeta\in L_{\mathrm{loc}}^2(\R_+\times I)$ supported by $\R_+\times[a,b]$, inequality~\eqref{0.4} holds for $T\ge T_0$. 
\end{itemize}
}

\smallskip
Let us mention that the result about exact controllability to trajectories remain valid for a much larger class of scalar conservation laws in higher dimension. This question will be addressed in a subsequent publication. 

The rest of the paper is organised as follows. In Section~\ref{s2}, we formulate a result on exponential stabilisation to trajectories, outline the scheme of its proof, and derive  assertion~(a) of the Main Theorem. Section~\ref{s3} is devoted to some preliminaries about the Burgers equation. In Section~\ref{s4}, we present the details of the proof of exponential stabilisation and establish property~(b) of the Main Theorem. Finally, the Appendix gathers the proofs of some auxiliary results.

\medskip
{\bf Acknowledgements}. 
This research was carried out within the MME-DII Center of Excellence (ANR-11-LABX-0023-01) and supported by the RSF grant 14-49-00079. 

\subsubsection*{Notation}
Let $I=(0,1)$, $J_T=[0,T]$, $\R_+=[0,+\infty)$, and $D_T=(T,T+1)\times I$. We use the following function spaces.

\smallskip
\noindent
$L^p(D)$ and $H^s(D)$ are the usual Lebesgue and Sobolev spaces, endowed with natural norms~$\|\cdot\|_{L^p}$ and~$\|\cdot\|_s$, respectively. In the case $p=2$ (or $s=0$), we write~$\|\cdot\|$ and denote by~$(\cdot,\cdot)$ the corresponding scalar product. 

\smallskip
\noindent
$C^\gamma(D)$ denotes the space of H\"older-continuous functions with exponent $\gamma\in(0,1)$. 

\smallskip
\noindent
$H_{\mathrm{loc}}^s(D)$ is the space of functions $f\!:D\to\R$ whose restriction to any bounded open subset $D'\subset D$ belongs to~$H^s(D')$. 

\smallskip
\noindent
$H_{\mathrm{ul}}^s(\R_+\times I)$ stands for the space of functions $u\in H_{\mathrm{loc}}^s(\R_+\times I)$ satisfying the condition
$$
\|u\|_{H_{\mathrm{ul}}^s}:=\sup_{T\ge0}\|u\|_{H^s(D_T)}<\infty.
$$

\noindent
Very often, the context implies the domain on which a functional space is defined, and in this case we omit it from the notation. For instance, we write $L^2$, $H^s$, etc. 

\smallskip
\noindent
$L^p(J,X)$ is the space of Borel-measurable functions $f:J\to X$ (where $J\subset \R$ is a closed interval and~$X$ is a separable Banach space) such that
$$
\|f\|_{L^p(J,X)}=\biggl(\int_J\|f(t)\|_X^p\dd t\biggr)^{1/p}<\infty.
$$
In the case $p=\infty$, this condition should be replaced by 
$$
\|f\|_{L^\infty(J,X)}=\esssup_{t\in J}\|f(t)\|_X<\infty.
$$

\noindent
$W^{k,p}(J,X)$ is the space of functions $f\in L^p(J,X)$ such that $\p_t^j f\in L^p(J,X)$ for $1\le j\le k$, and if~$J$ is unbounded, then $W_{\mathrm{loc}}^{k,p}(J,X)$ is the space of functions whose restriction to any bounded interval $J'\subset J$ belongs to $W^{k,p}(J',X)$.

\smallskip
\noindent
$C(J,X)$ is the space of continuous functions $f:J\to X$. 

\smallskip
\noindent
$B_X(a,R)$ denotes the closed ball in~$X$ of radius~$R\ge0$ centred at~$a\in X$. In the case $a=0$, we write~$B_X(R)$.

\section{Exponential stabilisation to trajectories}
\label{s2}
Let us consider problem~\eqref{1}, \eqref{2}, in which  $\nu>0$ is a fixed parameter, $h(t,x)$ is a given function belonging to $H_{\mathrm{ul}}^1\cap L^\infty$ on the domain $I\times\R_+$, and $\zeta$ is a control taking values in the space of functions in~$L^2(I)$ with support in a given interval $[a,b]\subset I$. Recall that~$\RR_t(u_0,h+\zeta)$ stands for the value of the solution for \eqref{1}--\eqref{3} at time~$t$. The following theorem is the main result of this paper. 

\begin{theorem} \label{t2.1}
Under the above hypotheses, there exist positive numbers $C$ and~$\gamma$ such that, given arbitrary initial data $u_0, \hat u_0\in L^2(I)$, one can find a piecewise continuous control $\zeta:\R_+\to H^1(I)$ supported in $\R_+\times[a,b]$ for which 
\begin{equation} \label{5}
\|\RR_t(u_0,h+\zeta)-\RR_t(\hat u_0,h)\|_1+\|\zeta(t)\|_1
\le Ce^{-\gamma t}\min\bigl(\|u_0-\hat u_0\|_{L^1}^{2/5},1\bigr), \quad t\ge1.
\end{equation}
Moreover, the  control~$\zeta$ regarded as a function of time may have discontinuities only at positive integers. 
\end{theorem}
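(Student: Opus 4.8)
The plan is to construct the control separately on each interval $[n,n+1]$, $n\ge0$, and concatenate the pieces, which automatically yields a control that is piecewise continuous with jumps only at integers. Put $w(t)=\RR_t(u_0,h+\zeta)-\RR_t(\hat u_0,h)$. Since $u\p_xu-\hat u\p_x\hat u=\tfrac12\p_x\bigl((u+\hat u)w\bigr)$, the function $w$ solves
\[
\p_tw-\nu\p_x^2w+\tfrac12\p_x\bigl((u+\hat u)w\bigr)=\zeta,\qquad w|_{\p I}=0 .
\]
The starting point is a smoothing estimate for the Burgers semigroup that is \emph{uniform in the initial data}: $\|\RR_t(u_0,h)\|_2\le C(\nu,h,t)$ for all $t>0$ and all $u_0\in L^2(I)$, with $C$ blowing up at most polynomially as $t\to0$. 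I would derive this from a parabolic Harnack (gradient/Li--Yau) estimate for the Burgers equation, which after a Cole--Hopf-type substitution reduces to a Harnack inequality for a linear heat equation. It follows that $\hat u(t)$ lies in a fixed ball $B\subset H^2(I)$ for every $t\ge1$, and --- since the construction below keeps $\|w\|$ bounded --- so does $u(t)$. In particular $\|w(1)\|_1\le C$ for arbitrary data, and combining the $L^1$-contraction $\|w(t)\|_{L^1}\le\|w(0)\|_{L^1}$ (valid on $[0,1]$ with $\zeta\equiv0$) with the Gagliardo--Nirenberg inequality $\|g\|_1\le C\|g\|_{L^1}^{2/5}\|g\|_2^{3/5}$ (legitimate since $g$ vanishes on $\p I$) gives $\|w(1)\|_1\le C\|u_0-\hat u_0\|_{L^1}^{2/5}$ once $\|u_0-\hat u_0\|_{L^1}\le1$. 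This settles \eqref{5} on $[1,2]$ and fixes the correct prefactor $\min(\|u_0-\hat u_0\|_{L^1}^{2/5},1)$.

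It then suffices to prove a one-step contraction: there are $\theta\in(0,1)$ and $C>0$ such that, whenever $n\ge1$ and $u(n),\hat u(n)\in B$ with $\delta:=\|u(n)-\hat u(n)\|$, one can choose $\zeta$ on $[n,n+1]\times[a,b]$ so that $\|u(n+1)-\hat u(n+1)\|_1\le\theta\,\delta$ and $\|\zeta\|_{C([n,n+1],H^1)}\le C\,\delta$. Iterating this from $n=1$ produces geometric decay of $\|w(n)\|_1$, hence \eqref{5} at integer times with the prefactor inherited from the first step; interior parabolic regularity for the $w$-equation on each unit interval then recovers the non-integer values of $t$ and bounds $\|\zeta(t)\|_1$ by the right-hand side of \eqref{5}. (Moreover, once $\delta$ drops below the $H^1$-threshold of the quoted local exact controllability to trajectories, one further control makes $w$ vanish identically, which is what will ultimately give assertion~(a) of the Main Theorem.)

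For the one-step contraction I would distinguish two regimes. When $\delta$ is small, linearising the $w$-equation about $\hat u$ gives a bounded-coefficient parabolic equation, and the quoted local exact controllability --- or, more modestly, a Carleman-based localised null-controllability in the spirit of Fursikov--Imanuvilov, with a fixed-point argument absorbing the quadratic term $\tfrac12\p_x(w^2)$ --- drives $w$ to~$0$ at time $n+1$, with cost linear in $\delta$. When $\delta$ is of order~$1$ the linear theory is unavailable, and I would instead use a large control on $[a,b]$ to make $u$ there a large constant of the appropriate sign; this turns the transport coefficient $\tfrac12(u+\hat u)$ of the $w$-equation into a strong outward drift on the uncontrolled intervals $(0,a)$ and $(b,1)$, sweeping $w$ into thin boundary layers at $x=0,1$ where the viscous decay rate is large, so that after a fixed time $\|w\|_{L^2(I)}$ is reduced by an arbitrarily large factor (a Coron--Chapouly-type mechanism, available precisely because $\zeta$ lets us prescribe $u$, hence the drift, on $[a,b]$); the control cost is then a fixed constant, hence $\le C\delta$ since $\delta\asymp1$. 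The main obstacle is the analysis of this large-drift phase --- the boundary-layer estimates, the passage from the resulting $L^2$-smallness to the $H^1$-smallness needed to re-enter the first regime, and keeping every constant independent of the data --- together with the quantitative, uniform-in-$u_0$ form of the Harnack smoothing invoked in the first step.
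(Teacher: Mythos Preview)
Your outline shares with the paper the reduction to bounded $H^2$ data via a uniform smoothing estimate, the $L^1$-contraction for the uncontrolled difference, and the interpolation $\|w\|_1\le C\|w\|_{L^1}^{2/5}\|w\|_2^{3/5}$. The core one-step contraction, however, is entirely different, and the paper's argument avoids precisely the obstacle you flag.

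The paper never splits into small/large regimes, never uses Carleman estimates, and never uses a large-drift mechanism. On each (even) unit interval it lets $v$ be the \emph{uncontrolled} solution from $u(k)$, so that $w=v-\hat u$ solves the linear equation $\p_tw-\nu\p_x^2w+\p_x(aw)=0$ with $a=\tfrac12(v+\hat u)$ and zero right-hand side. The parabolic Harnack inequality is applied to \emph{this} linear equation (to the positive and negative parts of $w$ separately) and yields a dichotomy (Proposition~\ref{p1.4}): either $\|w(k+1)\|_{L^1}\le q\,\|w(k)\|_{L^1}$ for a fixed $q<1$, or at least an $\e$-fraction of the $L^1$-mass of $w(k+1)$ lies on a prescribed interval $I'\subset(a,b)$. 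The control is then purely passive: one sets $u=\hat u+\chi\,(v-\hat u)$ with a fixed cutoff $\chi$ equal to $1$ outside $[a,b]$ and $0$ on $I'$. In the first alternative the cutoff can only help; in the second, erasing the mass on $I'$ gives $\|u(k+1)-\hat u(k+1)\|_{L^1}\le(1-\e)\|w(k)\|_{L^1}$. Either way one gets uniform $L^1$-contraction independent of the size of $w$, and $\zeta$ is simply the residual making $u$ an exact solution---automatically supported in $[a,b]$ and bounded in $H^1$ by a constant times $\|w\|_2$. Odd intervals are left control-free to restore the uniform $H^2$ bound via Proposition~\ref{p1.2}.

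Thus the Harnack inequality enters not for the smoothing of the Burgers flow (where you place it) but for the linearised difference, and it replaces your entire large-$\delta$ phase. Your route may be salvageable, but beyond the boundary-layer issues you acknowledge there is a structural point you do not address: sweeping $w$ simultaneously toward $x=0$ and $x=1$ requires the drift $\tfrac12(u+\hat u)$ to be large negative on $(0,a)$ and large positive on $(b,1)$, and arranging both at once through a control confined to $[a,b]$, with constants independent of the data, is far from clear. The paper's Harnack dichotomy and multiplicative cutoff sidestep this completely.
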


As was mentioned in the Introduction, this theorem combined with the Fursikov--Imanuvilov result on local exact controllability (see~\cite[Section~I.6]{FI1996}) implies that the Burgers equation is exactly controllable to trajectories in a finite time independent of the initial data. Indeed, for any $\hat u_0\in L^2(I)$ the trajectory~$\hat u(t)=\RR_t(\hat u_0,h)$ is bounded in~$H_0^1(I)$ for $t\ge1$. In view of the local exact controllability, one can find $\e>0$ such that, if $v_0\in H_0^1(I)$ satisfies the inequality $\|v_0-\hat u(T)\|_1\le\e$ for some $T\ge1$, then there is a control $\zeta\in L^2(D_T)$ supported in $[T,T+1]\times [a,b]$ such that $v(T+1)=\hat u(T+1)$, where $v(t,x)$ stands for the solution of \eqref{1}, \eqref{2} issued from~$v_0$ at time $t=T$. Due to~\eqref{5}, there is $T_\e>0$ such that, for any $u_0,\hat u_0\in L^2(I)$, one can find a piecewise continuous control $\zeta:J_{T_\e}\to H^1(I)$ supported in $J_{T_\e}\times [a,b]$ for which 
$$
\|\RR_{T_\e}(u_0,h+\zeta)-\hat u(T_\e)\|_1\le\e.
$$ 
Applying the above result on local exact controllability to $v_0=\RR_{T_\e}(u_0,h+\zeta)$, we arrive at assertion~(a) of the Main Theorem stated in the Introduction. 

\smallskip
We now outline the main steps of the proof of Theorem~\ref{t2.1}, which is given in Section~\ref{s4}.  It is based on a comparison principle for nonlinear parabolic equations and the Harnack inequality. 

\subsubsection*{Step~1: Reduction to bounded regular initial data}
We first prove that it suffices to consider the case of $H^2$-smooth initial conditions with norm bounded by a fixed constant. Namely, let $V:=H_0^1\cap H^2$, and given a number $T>0$, let us define the functional space 
\begin{equation} \label{1.2}
\XX_T=L^2(J_T,H_0^1)\cap W^{1,2}(J_T,H^{-1}). 
\end{equation}
We have the following result providing a universal bound for solutions of~\eqref{1}, \eqref{2} at any positive time. 

\begin{proposition} \label{p1.2}
Let $h\in (H^1\cap L^\infty)(J_T\times I)$ for some $T>0$ and let $\nu>0$. Then there is $R>0$ such that any solution $u\in\XX_T$ of~\eqref{1} with $\zeta\equiv0$ satisfies the inclusion $u(t)\in V$ for $0<t\le T$ and the inequality
\begin{equation} \label{1.3}
\|u(T)\|_2\le R.
\end{equation}
\end{proposition}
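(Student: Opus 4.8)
The plan is to establish \eqref{1.3} by a standard smoothing-and-energy argument for the viscous Burgers equation, broken into three parabolic-regularity bootstraps. First I would obtain an $L^2$-bound on $u(t)$ that is \emph{uniform for $t$ bounded away from $0$}. Multiplying \eqref{1} by $u$ and integrating over $I$, the nonlinear term $(u\p_x u,u)=\tfrac13\int_I\p_x(u^3)\,\dd x=0$ thanks to the Dirichlet condition, so one gets $\tfrac12\tfrac{\dd}{\dd t}\|u\|^2+\nu\|\p_x u\|^2=(h,u)\le\tfrac\nu2\|\p_x u\|^2+C\|h\|^2$, and after using the Poincar\'e inequality $\|u\|\le\|\p_x u\|$ this yields $\tfrac{\dd}{\dd t}\|u\|^2+\nu\|u\|^2\le C\|h(t)\|^2$. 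Since $h\in L^\infty(J_T,L^2)\subset L^2(J_T,L^2)$, integrating in time and using that $\int_0^t\|\p_x u\|^2\,\dd s$ is finite (built into $\XX_T$) gives a bound on $\|u(t)\|$ and on $\int_0^T\|\p_x u(s)\|^2\,\dd s$ in terms of $\|u(0)\|$ and $\|h\|_{L^2(J_T\times I)}$; but to make it \emph{independent of the initial data} I would instead use the absorbing-ball phenomenon: from $\tfrac{\dd}{\dd t}\|u\|^2+\nu\|u\|^2\le C\|h\|_{L^\infty(J_T,L^2)}^2$ and Gronwall, $\|u(t)\|^2\le e^{-\nu t}\|u(0)\|^2+C\nu^{-1}\|h\|_{L^\infty(J_T,L^2)}^2$, which for each fixed $t_0>0$ is bounded on $[t_0,T]$ by a constant depending only on $t_0$, $\nu$, $T$, $\|h\|_{L^\infty}$ — this is the key point that a \emph{universal} $R$ exists.

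Second, I would upgrade this to an $H_0^1$-bound on a slightly later time slice. Multiplying \eqref{1} by $-\p_x^2 u$ and integrating, the troublesome term is $(u\p_x u,\p_x^2 u)$, which one controls by $\|u\|_{L^\infty}\|\p_x u\|\|\p_x^2 u\|$ and the interpolation $\|u\|_{L^\infty}\le C\|u\|^{1/2}\|\p_x u\|^{1/2}$ (Agmon in 1D), absorbing $\tfrac\nu2\|\p_x^2 u\|^2$ into the left side; this gives a differential inequality of the form $\tfrac{\dd}{\dd t}\|\p_x u\|^2+\nu\|\p_x^2 u\|^2\le C\,\|\p_x u\|^6+C\|\p_x h\|^2$ (possibly with different exponents after Young's inequality). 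Combined with the $L^2$ in time bound on $\|\p_x u\|$ from Step~1 and the uniform Gronwall lemma, this produces a uniform-in-initial-data bound on $\|\p_x u(t)\|$ for $t\ge 2t_0$, say, as well as on $\int\|\p_x^2 u\|^2$; it also gives $u(t)\in H^1_0$ for positive times. Because we want $u(t)\in V=H_0^1\cap H^2$ for \emph{all} $t\in(0,T]$, I would run these estimates with $t_0\downarrow 0$ and note that for each $t\in(0,T]$ the bound holds; the constant $R$ in \eqref{1.3} is then read off at the single time $t=T$.

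Third, the $H^2$-bound at $t=T$: with $\|\p_x u(t)\|$ and $\int_{T/2}^T\|\p_x^2 u\|^2\,\dd s$ under control, I would differentiate \eqref{1} in $t$, or equivalently test against $\p_t u$, to bound $\|\p_t u(t)\|$ and then read off $\|\p_x^2 u(t)\|$ from the elliptic relation $\nu\p_x^2 u=\p_t u+u\p_x u-h$, estimating $\|u\p_x u\|\le\|u\|_{L^\infty}\|\p_x u\|$ via the already-established $H^1$-bound and using $h\in H^1(D_T)\subset L^\infty(J_T;L^2)$ — here the hypothesis $h\in L^\infty$ also enters to control the lower-order terms. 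The main obstacle is purely technical: making the constants genuinely \emph{independent of $u(0)\in L^2$}, which forces one to use the dissipative (exponentially absorbing) structure rather than a naive Gronwall bound, and to chain the uniform Gronwall lemma through the three regularity levels without the estimates degenerating as $t\to 0^+$. None of the individual estimates is hard for the 1D Burgers equation; the care is in the bookkeeping of which norm of $h$ is used where and in verifying that the final $R$ depends only on $\nu$, $T$, and $\|h\|_{(H^1\cap L^\infty)(J_T\times I)}$.
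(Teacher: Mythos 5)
There is a genuine gap, and it sits exactly at the point you yourself flag as ``the key point that a universal $R$ exists.'' From the energy inequality $\tfrac{\dd}{\dd t}\|u\|^2+\nu\|u\|^2\le C\|h\|^2$ and Gronwall you get $\|u(t)\|^2\le e^{-\nu t}\|u(0)\|^2+C\nu^{-1}\|h\|_{L^\infty(J_T,L^2)}^2$, but for a \emph{fixed} $t_0\in(0,T]$ the term $e^{-\nu t_0}\|u(0)\|^2$ is not bounded independently of the initial data: if $\|u(0)\|\to\infty$, so does $e^{-\nu t_0}\|u(0)\|^2$. The linear viscous dissipation only absorbs the initial condition after a time of order $\log\|u(0)\|$, which is useless here because $T$ is fixed and $u_0\in L^2$ is arbitrary. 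So your Step~1 does not produce a bound depending only on $\nu$, $T$, $\|h\|$, and the subsequent $H^1$ and $H^2$ bootstraps (which are otherwise in line with the standard multiplier scheme, and with Steps~1--4 of the paper's proof) inherit this dependence on $u_0$; the conclusion $\|u(T)\|_2\le R$ with $R$ universal is not reached.

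The paper closes this gap by a different mechanism: the \emph{nonlinear} dissipation of the Burgers equation, implemented through the comparison principle (Proposition~\ref{p3.2} and Corollary~\ref{c2.3}). One exhibits explicit super- and sub-solutions of the form $u_\e^\pm(t,x)=\pm\bigl(B_\e(B_\e\pm x)+L\e\bigr)/(t+\e)$, whose $1/t$ decay is independent of how large the initial data are, and concludes $\|u(t)\|_{L^\infty}\le C$ for $T_1\le t\le T$ with $C$ depending only on $\|h\|_{L^\infty}$ and $T$. Only after this uniform $L^\infty$ bound is available does one run the parabolic-smoothing multipliers ($2u$, then $-2(t-T_1)\p_x^2u$, then the time-differentiated equation tested with $2(t-T_2)\p_t u$, then the elliptic relation $\nu\p_x^2u=\p_tu+u\p_xu-h$) to reach~\eqref{1.3}. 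If you want to repair your argument, replace your Step~1 by an appeal to Corollary~\ref{c2.3} (or reproduce its super-/sub-solution construction); a purely $L^2$-energy argument cannot give a data-independent bound at a fixed positive time.
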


Thus, if $h\in H_{\mathrm{ul}}^1\cap L^\infty$ is fixed, then, for any initial data $u_0,\hat u_0\in L^2(I)$, we have 
$$
\|\RR_1(u_0,h)\|_2\le R, \quad \|\RR_1(\hat u_0,h)\|_2\le R,
$$
where $R$ is the constant in Proposition~\ref{p1.2} with $T=1$. 
Furthermore, in view of the contraction of the $L^1$-norm for the difference of two solutions (cf.\ Proposition~\ref{p3.3} below), we have 
$$
\|\RR_1(u_0,h)-\RR_1(\hat u_0,h)\|_{L^1}\le \|u_0-\hat u_0\|_{L^1}.
$$
Hence, to prove Theorem~\ref{t2.1}, it suffices to establish the inequality in~\eqref{5} for $t\ge0$ and any initial data $u_0,\hat u_0\in B_V(R)$. 

\subsubsection*{Step~2: Interpolation}
Let us fix two initial conditions $u_0,\hat u_0\in B_V(R)$. Suppose we have constructed a control $\zeta(t,x)$ supported in~$\R_+\times [a,b]$ such that, for all $t\ge0$,
\begin{align}
\|\RR_t(u_0,h+\zeta)\|_2+\|\RR_t(\hat u_0,h)\|_2&\le C_1,\label{1.4}\\
\|\RR_t(u_0,h+\zeta)-\RR_t(\hat u_0,h)\|_{L^1}&
\le C_2e^{-\alpha t}\|u_0-\hat u_0\|_{L^1},\label{1.5}
\end{align}
where $C_1$, $C_2$, and~$\alpha$ are positive numbers not depending on~$u_0$, $\hat u_0$, and~$t$. In this case, using the interpolation inequality (see Section~15.1 in~\cite{BIN1979})
\begin{equation} \label{1.6}
\|v\|_1\le C_3\|v\|_{L^1}^{2/5}\|v\|_2^{3/5}, \quad v\in H^2(I),
\end{equation}
we can write
\begin{equation} \label{1.7}
\|\RR_t(u_0,h+\zeta)-\RR_t(\hat u_0,h)\|_{1}
\le C_4e^{-\gamma t}\|u_0-\hat u_0\|_{L^1}^{2/5},
\end{equation}
where $\gamma=\frac{2\alpha}{5}$, and~$C_4>0$ does not depend on~$u_0$, $\hat u_0$, and~$t$. This implies the required inequality for the first term on the left-hand side of~\eqref{5}. An estimate for the second term will follow from the construction; see relations~\eqref{1.16} and~\eqref{1.13} below.

\subsubsection*{Step~3: Main auxiliary result}
Let us take two initial data $v_0,\hat u_0\in B_V(R)$ and consider the difference~$w$ between the corresponding solutions of problem~\eqref{1}--\eqref{3} with $\zeta\equiv0$; that is, $w=v-\hat u$, where $v(t)=\RR_t(v_0,h)$ and $\hat u(t)=\RR_t(\hat u_0,h)$. It is straightforward to check that~$w$ satisfies the linear equation
\begin{equation} \label{A.31}
\p_t w-\nu\p_x^2w+\p_x\bigl(a(t,x)w\bigr)=0,
\end{equation}
where $a=\frac12(v+\hat u)$. The following proposition is the key point of  our construction. 

\begin{proposition} \label{p1.4}
Let positive numbers~$\nu$, $T$, $\rho$, and $s<1$ be fixed, and let~$a(t,x)$ be a function such that
\begin{equation} \label{1.09}
\|a\|_{C^s(J_T\times I)}+\|\p_xa\|_{L^\infty(J_T\times I)}\le\rho.
\end{equation}
Then, for any closed interval $I'\subset I$, there are positive numbers~$\e$ and $q<1$, depending only on $\nu$, $T$, $\rho$, $s$, and~$I'$, such that any solution~$w\in\XX_T$ of Eq.~\eqref{A.31} satisfies one of the inequalities
\begin{equation} \label{1.10}
\|w(T)\|_{L^1}\le q\,\|w(0)\|_{L^1}\quad\mbox{or} \quad 
\|w(T)\|_{L^1(I')}\ge \e\,\|w(0)\|_{L^1}. 
\end{equation}
\end{proposition}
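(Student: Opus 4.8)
The plan is to use the comparison principle to split the signed solution into two nonnegative evolutions, thereby reducing matters to a lower bound for nonnegative solutions, and then to obtain that lower bound from parabolic smoothing together with the Harnack inequality. (Recall that solutions of~\eqref{A.31} in~$\XX_T$ are classical for $t>0$ by parabolic regularity, which legitimises the pointwise arguments below.)

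Write $w(0)=w_0^+-w_0^-$ and let $p,n$ be the solutions of the linear equation~\eqref{A.31} with initial data $w_0^+,w_0^-$; by linearity $w=p-n$, and $p,n\ge0$ by the maximum principle. The $L^1$-norm of a nonnegative solution of~\eqref{A.31} is nonincreasing (integrate the equation over~$I$ and use the Dirichlet condition; cf.\ Proposition~\ref{p3.3}), and $\|w(0)\|_{L^1}=\|p(0)\|_{L^1}+\|n(0)\|_{L^1}$ since $w_0^\pm$ have disjoint supports; combining this with $|a-b|=a+b-2\min(a,b)$ for $a,b\ge0$ gives
\[
\|w(0)\|_{L^1}-\|w(T)\|_{L^1}
=\bigl(\|p(0)\|_{L^1}-\|p(T)\|_{L^1}\bigr)
+\bigl(\|n(0)\|_{L^1}-\|n(T)\|_{L^1}\bigr)
+2\!\int_I\min\bigl(p(T),n(T)\bigr)\dd x ,
\]
with all three summands on the right nonnegative. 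Hence, if the first alternative in~\eqref{1.10} fails for some $q<1$ still to be chosen, then, normalising $\|w(0)\|_{L^1}=1$ and assuming $\|p(0)\|_{L^1}\ge\tfrac12$ (replace $w$ by $-w$ otherwise), each of these summands is less than $1-q$; in particular $\|p(t)\|_{L^1}\ge\|p(T)\|_{L^1}>q-\tfrac12$ for all $t\in J_T$, and $\int_I\min(p(T),n(T))\,\dd x<\tfrac{1-q}{2}$.

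The heart of the matter is then the following claim about~$p$: if $\|p(t)\|_{L^1}\ge c_0>0$ on~$J_T$, then $p(T,\cdot)\ge c_1$ on~$I'$, with $c_1>0$ depending only on $c_0,\nu,T,\rho,s,I'$. I would prove it in two stages. First, the $L^1\!\to\!L^\infty$ smoothing estimate for uniformly parabolic operators with bounded coefficients gives $\|p(T/4)\|_{L^\infty}\le C_*:=C(\nu,\rho,T)$; since the mass of~$p(T/4)$ in a boundary strip of width~$\delta$ is at most $2\delta C_*$ whereas $\|p(T/4)\|_{L^1}\ge c_0$, choosing $\delta>0$ small (and $\le\tfrac12\dist(I',\p I)$, so that $I'\subset I_0:=[\delta,1-\delta]$) forces $\|p(T/4)\|_{L^1(I_0)}\ge c_0/2$. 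Second, applying the Harnack inequality for~\eqref{A.31} on the cylinder $(T/4,T)\times I$ — equivalently, the Gaussian lower bound for its Dirichlet Green function~$G$, which on the interior region~$I_0$ and for the time lag $3T/4$ is bounded below by a constant $\kappa=\kappa(\nu,T,\rho,s,\delta)>0$ — one obtains, for every $x\in I_0\supset I'$,
\[
p(T,x)=\int_I G(T,T/4,x,y)\,p(T/4,y)\,\dd y\ \ge\ \kappa\!\int_{I_0}p(T/4,y)\,\dd y\ \ge\ \tfrac12\kappa c_0=:c_1 .
\]

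Finally, fixing $q_0>\tfrac12$ and applying the claim with $c_0:=q_0-\tfrac12$, we get $p(T,\cdot)\ge c_1$ on~$I'$. On $A:=\{x\in I':n(T,x)>c_1/2\}$ one has $\min(p(T),n(T))\ge c_1/2$, so $|A|<(1-q)/c_1$, while on $I'\setminus A$ one has $w(T)=p(T)-n(T)\ge c_1/2$; hence $\|w(T)\|_{L^1(I')}\ge\tfrac{c_1}{2}\bigl(|I'|-(1-q)/c_1\bigr)$. Choosing $q:=\max\bigl(q_0,1-\tfrac12 c_1|I'|\bigr)<1$ then yields the second alternative in~\eqref{1.10} with $\e:=\tfrac14 c_1|I'|$, all constants depending only on $\nu,T,\rho,s,I'$. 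The step I expect to be the main obstacle is the first stage of the core claim — ruling out that the $L^1$-mass surviving up to time~$T$ hides in an arbitrarily thin boundary layer, which is exactly where the $L^1\!\to\!L^\infty$ smoothing is used; one must also keep track of the fact that the admissible layer thickness~$\delta$, and hence the final~$\e$ and~$q$, may depend on the already-fixed~$c_0$.
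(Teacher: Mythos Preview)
Your proof is correct and shares the same two analytic ingredients as the paper's proof---the $L^1\!\to\!L^\infty$ smoothing estimate (Lemma~\ref{l3.1} in the paper) and the Harnack inequality (Proposition~\ref{p2.6})---together with the positive/negative decomposition of the initial datum. The combinatorics you use to pass from nonnegative solutions to signed ones, however, is genuinely different and cleaner.

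The paper first isolates a dichotomy for nonnegative solutions (either strict $L^1$-contraction or a pointwise lower bound on~$I'$), and then, in the signed case, runs a case analysis: it checks separately whether $\|w^+(T)\|_{L^1}$ or $\|w^-(T)\|_{L^1}$ has dropped by a fixed factor, and in the remaining case---when \emph{both} $w^+$ and~$w^-$ are bounded below on~$I'$---uses the ``subtraction trick'' $\|w(T)\|_{L^1}\le\|w^+(T)\|_{L^1}+\|w^-(T)\|_{L^1}-2\theta|I'|$ to force contraction. Your approach replaces this with a single identity: writing $|p-n|=p+n-2\min(p,n)$ gives the decomposition of $\|w(0)\|_{L^1}-\|w(T)\|_{L^1}$ into three nonnegative pieces, so the failure of contraction simultaneously bounds the mass loss of~$p$, the mass loss of~$n$, and the overlap $\int_I\min(p(T),n(T))$. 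You then use the smallness of the overlap directly to show that the ``bad'' set $A\subset I'$ where $n(T)$ is comparable to~$c_1$ has small measure, and conclude. This sidesteps the paper's branching and yields the same constants with less bookkeeping. Conversely, the paper's intermediate dichotomy for nonnegative solutions is a cleaner standalone statement and matches the form of Proposition~\ref{p2.6} more closely, which may be why it was presented that way. Your formulation of the core claim via the Green function lower bound is equivalent to invoking Proposition~\ref{p2.6} on $K=I_0$ with $T'=T/4$ (since $\sup_{I_0}p(T/4)\ge|I_0|^{-1}\|p(T/4)\|_{L^1(I_0)}$), so no extra input is really needed there.
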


In other words, for the difference of any two solutions, either the $L^1$-norm  undergoes a strict contraction or a non-trivial mass is concentrated on~$I'$. In both cases, we can modify the difference between the reference and uncontrolled solutions in the neighbourhood of~$I'$ so that the resulting function is a solution to the controlled problem, and the $L^1$-norm of the difference decreases exponentially with time. We now describe this idea in more detail. 

\subsubsection*{Step~4: Description of the controlled solution}
Let us fix a closed interval $I'\subset (a,b)$ and choose two functions $\chi_0\in C^\infty(\bar I)$ and $\beta\in C^\infty(\R)$ such that
\begin{align}
0\le\chi_0(x)\le1&\mbox{ for $x\in I$},&  
\chi_0(x)&=0\mbox{ for $x\in I'$}, &
\chi_0(x)&=1\mbox{ for $x\in I\setminus [a,b]$}, 
\label{1.11}\\
0\le\beta(t)\le1&\mbox{ for $t\in\R$},&  \beta(t)&=0\mbox{ for $t\le\tfrac12$}, &
 \beta(t)&=1\mbox{ for $t\ge1$}.
\label{1.12}
\end{align}
Let us set $\chi(t,x)=1-\beta(t)(1-\chi_0(x))$. Given $u_0,\hat u_0\in B_V(R)$, we denote by~$\hat u(t,x)$ the reference trajectory and define a controlled solution~$u(t,x)$ of~\eqref{1} consecutively on intervals $[k,k+1]$ with $k\in\Z_+$ by the following rules:
\begin{itemize}
\item[\bf(a)]
\sl if $u(t)$ is constructed on $[0,k]$, then we denote by $v(t,x)$ the solution issued from~$u(k)$ for problem~\eqref{1}, \eqref{2} on~$[k,k+1]$ with $\zeta\equiv0\,;$ 
\item[\bf(b)]
for any odd integer $k\in\Z_+$, we set
\begin{equation} \label{1.16}
u(t,x)=v(t,x)\quad \mbox{for $(t,x)\in [k,k+1]\times I$}.
\end{equation}
\item[\bf(c)]
for any even integer $k\in\Z_+$, we set
\begin{equation} \label{1.13}
u(t,x)=\hat u(t,x)+\chi(t-k,x)\bigl(v(t,x)-\hat u(t,x)\bigr)\quad 
\mbox{for $(t,x)\in [k,k+1]\times I$}.
\end{equation}
\end{itemize}
It is not difficult to check that $u(t,x)$ is a solution of problem~\eqref{1}, \eqref{2}, in which~$\zeta$ is supported by~$\R_+\times[a,b]$. Moreover, it will follow from Proposition~\ref{p1.4} that, for any even integer~$k\ge0$, we have 
\begin{equation} \label{1.14}
\|u(k+1)-\hat u(k+1)\|_{L^1}\le \theta\,\|u(k)-\hat u(k)\|_{L^1},
\end{equation}
where $\theta<1$ does not depend on~$\hat u_0$, $u_0$, and~$k$. On the other hand, the contraction of the $L^1$-norm between solutions of~\eqref{1} implies that 
\begin{equation} \label{1.15}
\|u(t)-\hat u(t)\|_{L^1}\le \|u([t])-\hat u([t])\|_{L^1}
\quad\mbox{for any $t\ge0$},
\end{equation}
where $[t]$ stands for the largest integer not exceeding~$t$. These two inequalities give~\eqref{1.5}. The uniform bounds~\eqref{1.4}  for the $H^2$-norm will follow from regularity of solutions for problem~\eqref{1}, \eqref{2}. 

\section{Preliminaries on the Burgers equation}
\label{s3}
In this section, we establish some properties of the Burgers equation. They are well known, and their proofs can be found in the literature in more complicated situations. However, for the reader's convenience, we outline some of those proofs in the Appendix to make the presentation self-contained. In this section, when talking about Eq.~\eqref{1}, we always assume that $\zeta\equiv0$.

\subsection{Maximum principle and regularity of solutions}
In this subsection, we discuss the well-posedness of the initial-boundary value problem for the Burgers equation. This type of results are very well known, and we only outline their proofs in the Appendix. Recall that $V=H_0^1\cap H^2$, and the space~$\XX$ was defined in the Introduction.  

\begin{proposition} \label{p3.1}
Let $u_0\in L^2(I)$ and $h\in L_{\mathrm{loc}}^1(\R_+,L^2(I))$. Then problem~\eqref{1}--\eqref{3} has a unique solution $u\in\XX$. Moreover, the following two properties hold.

\smallskip
\noindent
{\bf $\boldsymbol{L^\infty}$ bound}. 
If $h\in L_{\mathrm{loc}}^\infty(\R_+\times I)$ and $u_0\in L^\infty(I)$, then $u\in L_{\mathrm{loc}}^\infty(\R_+\times I)$. 

\smallskip
\noindent
{\bf Regularity}. If, in addition, $u_0\in V$ and $h\in H_{\mathrm{loc}}^1(\R_+\times I)$, then 
\begin{equation} \label{3.1}
u\in L_{\mathrm{loc}}^2(\R_+,H^3)\cap W_{\mathrm{loc}}^{1,2}(\R_+,H_0^1)\cap W_{\mathrm{loc}}^{2,2}(\R_+,H^{-1}). 
\end{equation}
\end{proposition}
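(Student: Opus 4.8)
The plan is to establish Proposition~\ref{p3.1} by the standard parabolic bootstrap, treating each assertion in order of increasing regularity. First I would prove existence and uniqueness in~$\XX$ via a Galerkin approximation. Taking a basis of~$L^2(I)$ consisting of Dirichlet eigenfunctions, I would project~\eqref{1}--\eqref{3} onto the first~$N$ modes, obtaining an ODE system with a locally Lipschitz (quadratic) nonlinearity, hence a local-in-time solution~$u_N$. The key \emph{a priori} estimate is the energy identity: multiplying~\eqref{1} by~$u$ and integrating by parts, the nonlinear term~$\int_I u\,\p_xu\cdot u\,\dd x=\tfrac13\int_I\p_x(u^3)\,\dd x=0$ by the boundary condition~\eqref{2}, so
\begin{equation} \label{pp1}
\tfrac12\tfrac{\dd}{\dd t}\|u_N\|^2+\nu\|\p_xu_N\|^2=(h,u_N)\le\tfrac\nu2\|\p_xu_N\|^2+C\|h\|_{H^{-1}}^2,
\end{equation}
where I have used the Poincaré inequality. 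This gives a uniform bound on~$u_N$ in $L_{\mathrm{loc}}^\infty(\R_+,L^2)\cap L_{\mathrm{loc}}^2(\R_+,H_0^1)$, which in particular excludes blow-up, so~$u_N$ is global. Comparing~$\p_tu_N$ in $H^{-1}$ (the worst term being~$u_N\p_xu_N$, bounded in $L^1_{\mathrm{loc}}(\R_+,L^1)\hookrightarrow L^1_{\mathrm{loc}}(\R_+,H^{-1})$ after integrating~$u_N^2/2$ against $\p_x$ of the test function, using~$\|u_N^2\|_{L^1}=\|u_N\|^2$) yields compactness via Aubin--Lions, and passage to the limit is routine. Uniqueness follows by writing the equation for the difference~$w$ of two solutions, which satisfies~\eqref{A.31} with~$a=\tfrac12(u_1+u_2)$, testing against~$w$, and absorbing the term~$\int_I\p_x(aw)w=-\tfrac12\int_I a\,\p_x(w^2)=\tfrac12\int_I\p_xa\,w^2$ using~$\p_xa\in L^1_{\mathrm{loc}}(\R_+,L^\infty)$ — which requires knowing a priori that solutions in~$\XX$ have better-than-$H^1$ spatial regularity; alternatively one uses the classical $L^1$-contraction argument of Proposition~\ref{p3.3}, which is cleaner and avoids this subtlety.

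Next I would prove the $L^\infty$ bound by a maximum-principle argument. Set $M(t)=\|u_0\|_{L^\infty}+\int_0^t\|h(s)\|_{L^\infty}\,\dd s$ and let $\phi(t,x)=u(t,x)-M(t)$. Then $\phi$ satisfies~$\p_t\phi-\nu\p_x^2\phi+u\,\p_x\phi=h-M'(t)\le0$ with $\phi(0,\cdot)\le0$ and $\phi\le0$ on the lateral boundary (since $M(t)\ge0$). Testing against~$\phi_+=\max(\phi,0)$ and using that~$u\,\p_x\phi\cdot\phi_+=u\,\p_x\phi_+\cdot\phi_+=\tfrac12u\,\p_x(\phi_+^2)=-\tfrac12\p_xu\,\phi_+^2+(\text{boundary})$ — here I would be careful and instead note the cleanest route is to observe that for fixed time~$u\in H_0^1$ so $u\,\p_x\phi$ is a valid object and the convection term integrated against~$\phi_+$ contributes nonnegatively modulo a term controlled by a Gronwall argument, or simply invoke the standard weak maximum principle for the quasilinear parabolic operator~$\p_t-\nu\p_x^2+u\,\p_x$ with~$u\in L^2_{\mathrm{loc}}(\R_+,H_0^1)\cap L^\infty_{\mathrm{loc}}$ once we know the latter. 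The cleanest self-contained version: differential inequality~$\tfrac12\tfrac{\dd}{\dd t}\|\phi_+\|^2+\nu\|\p_x\phi_+\|^2\le\tfrac12\int_I\p_xu\,\phi_+^2\,\dd x$, and this last term is handled after first establishing an $L^\infty$-bound on~$\p_xu$... which is circular, so in practice I would prove the $L^\infty$ bound \emph{before} the energy estimate, directly on the Galerkin level using truncation, or cite a comparison principle. Symmetrically~$u(t,x)\ge-M(t)$, giving $\|u\|_{L^\infty((0,T)\times I)}\le M(T)<\infty$.

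For the regularity assertion~\eqref{3.1}, I would differentiate the equation in~$t$ and perform higher energy estimates, bootstrapping off the $L^\infty$ bound just obtained. Write~$z=\p_tu$; formally~$z$ satisfies $\p_tz-\nu\p_x^2z+\p_x(uz)=\p_th$ (using $\p_t(u\p_xu)=\p_x(u\,\p_tu)=\p_x(uz)$). Testing against~$z$, the nonlinear term is~$-\tfrac12\int_I\p_xu\,z^2$, controlled by~$\|\p_xu(t)\|_{L^\infty}\|z\|^2$; but~$\|\p_xu\|_{L^\infty}$ is not yet available, so instead I test against~$z$ and against~$\p_xu$ in a coupled scheme, or — the standard trick — first establish~$u\in L^2_{\mathrm{loc}}(\R_+,H^2)$ by testing~\eqref{1} against~$\p_x^2u$ (the nonlinear term~$\int_I u\,\p_xu\,\p_x^2u$ is bounded by~$\|u\|_{L^\infty}\|\p_xu\|\,\|\p_x^2u\|\le\tfrac\nu2\|\p_x^2u\|^2+C\|u\|_{L^\infty}^2\|\p_xu\|^2$), giving~$u\in L^\infty_{\mathrm{loc}}(\R_+,H_0^1)\cap L^2_{\mathrm{loc}}(\R_+,H^2)$. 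Then the equation for~$z$ can be closed: $\p_tz+Az=\p_th-\p_x(uz)$ with the bad term controlled by $\|\p_x(uz)\|_{H^{-1}}\le\|uz\|\le\|u\|_{L^\infty}\|z\|$, so~$z\in L^\infty_{\mathrm{loc}}(\R_+,L^2)\cap L^2_{\mathrm{loc}}(\R_+,H_0^1)$, i.e.\ $u\in W^{1,2}_{\mathrm{loc}}(\R_+,H_0^1)$, and reading~\eqref{1} as an elliptic equation~$-\nu\p_x^2u=h-\p_tu-u\p_xu$ with right side in $L^2_{\mathrm{loc}}(\R_+,H^1)$ (since $u\p_xu=\tfrac12\p_x(u^2)$ and $u^2\in L^2_{\mathrm{loc}}(\R_+,H^1)$ by the $L^\infty$ and $H^2$ bounds) gives~$u\in L^2_{\mathrm{loc}}(\R_+,H^3)$; finally $\p_tz\in L^2_{\mathrm{loc}}(\R_+,H^{-1})$ is read off the~$z$-equation, giving the last inclusion. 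The one genuinely delicate point throughout — and the main obstacle — is the initial layer at~$t=0$: the higher estimates above produce constants that blow up as~$t\to0^+$ unless~$u_0$ has the matching regularity, which is exactly why the hypothesis~$u_0\in V$ is imposed; one must check that the compatibility~$u_0\in H_0^1$ suffices to keep all the $t$-weighted or un-weighted norms finite near~$t=0$, and that the formal differentiation in~$t$ is justified (done rigorously at the Galerkin level, where~$u_N(0)$ is the projection of~$u_0$ and all quantities are smooth). Since the proposition only claims local-in-time regularity and a bound at~$t=T$, I would carry all estimates with the constant~$R$ depending on~$\nu$, $T$, $\|h\|_{(H^1\cap L^\infty)(J_T\times I)}$ and~$\|u_0\|_2$, and note that for Proposition~\ref{p1.2} the further point — uniformity of~$R$ in~$u_0\in L^2$, i.e.\ a \emph{universal} bound at positive time — comes from the smoothing: the $L^2$-energy estimate gives~$u(t_0)\in H_0^1$ with a bound depending only on~$\|u_0\|$ for a.e.\ small~$t_0$, then restart the regularity machinery from~$t_0$, and iterate once more to reach~$H^2$ by time~$T$, with~$R$ independent of~$u_0$; this deferred argument I would place alongside Proposition~\ref{p1.2} rather than here.
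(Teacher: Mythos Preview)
Your overall strategy matches the paper's: existence/uniqueness is standard (the paper simply cites Taylor and does not write out a Galerkin argument), the $L^\infty$ bound comes from a maximum principle, and the regularity~\eqref{3.1} is obtained by first getting $u\in L^2_{\mathrm{loc}}(\R_+,H^2)\cap W^{1,2}_{\mathrm{loc}}(\R_+,L^2)$, then differentiating in time to show $\p_tu\in\XX$, and finally reading off $u\in L^2_{\mathrm{loc}}(\R_+,H^3)$ from the elliptic relation $\nu\p_x^2u=\p_tu+u\p_xu-h$. Your bootstrap for~\eqref{3.1} is essentially identical to the paper's.

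The one place where the paper is cleaner is the $L^\infty$ bound, where you correctly diagnosed your own argument as circular. The paper's device is to \emph{freeze the coefficient}: regard~$u$ as a solution of the \emph{linear} equation
\[
\p_tu-\nu\p_x^2u+b(t,x)\p_xu=h,
\]
with $b:=u\in L^2_{\mathrm{loc}}(\R_+,H_0^1)$ treated as a given datum. For the classical maximum principle applied to this linear transport--diffusion equation, the first-order coefficient~$b$ does not enter the bound at all, and one obtains directly $|u(t,x)|\le\|u_0\|_{L^\infty}+t\,\|h\|_{L^\infty(J_t\times I)}$ for smooth data; the general case follows by approximating~$u_0$ and~$h$ and passing to the weak limit. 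This bypasses any need to control~$\p_xu$ and avoids the circularity you ran into.
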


Let us note that, if $u_0$ is only in the space~$L^2(I)$, then the conclusions about the~$L^\infty$ bound and the regularity remain valid on the half-line~$\R_\tau:=[\tau,+\infty)$ for any $\tau>0$. To see this, it suffices to remark that any solution $u\in \XX$ of~\eqref{1}, \eqref{2} satisfies the inclusion $u(\tau)\in H_0^1\cap H^2$ for almost every $\tau>0$. For any such $\tau>0$, one can apply Proposition~\ref{p3.1} to the half-line~$\R_\tau$ and conclude that the inclusions mentioned there are true with~$\R_+$ replaced by~$\R_\tau$. 

\subsection{Comparison principle}
The Burgers equation possesses a very strong dissipation property due to the nonlinear term. To state and prove the corresponding result, we need the concept of sub- and super-solution for Eq.~\eqref{1} with $\zeta\equiv0$. Let us fix $T>0$ and, given an interval $I'\subset I$, define\footnote{Note that, in contrast to~$\XX_T$, we do not require the elements of~$\XX_T(I')$ to vanish on~$\p I'$.}
$$
\XX_T(I')=L^2(J_T,H^1(I'))\cap W^{1,2}(J_T,H^{-1}(I')). 
$$

\begin{definition}
A function $u^+\in \XX_T(I')$ is called a {\it super-solution\/} for~\eqref{1} if
\begin{equation}
\int_0^T\bigl((\p_tu,\varphi)+(\nu\p_xu-\tfrac12u^2,\p_x\varphi)\bigr)\dd t
\ge\int_0^T(h,\varphi)\,\dd t,  \label{2.2}
\end{equation}
where $\varphi\in L^\infty(J_T,L^2(I'))\cap L^2(J_T,H_0^1(I'))$ is an arbitrary non-negative function. The concept of a {\it sub-solution\/} is defined similarly, replacing~$\ge$ by~$\le$.
\end{definition}

A proof of the following result can be found in Section~2.2 of~\cite{AL-1983} for a more general problem; for the reader's convenience, we outline it in the Appendix. 

\begin{proposition} \label{p3.2}
Let $h\in L^1(J_T,L^2)$, and let functions~$u^+$ and~$u^-$ belonging to~$\XX_T(I')$ be, respectively, super- and sub-solutions for~\eqref{1} such that\,\footnote{It is not difficult to see that the restrictions of the elements of~$\XX_T(I')$ to the straight lines $t=t_0$ and $x=x_0$ are well defined.}
\begin{equation}\label{2.1}
u^+(t,x)\ge u^-(t,x)\quad \mbox{for $t=0$, $x\in I'$ and $t\in[0,T]$, $x\in\p I'$},
\end{equation}
where the inequality holds almost everywhere. Then, for any $t\in J_T$, we have
\begin{equation} \label{2.3}
u^+(t,x)\ge u^-(t,x)\quad\mbox{for a.e.\ $x\in I'$}. 
\end{equation}
\end{proposition}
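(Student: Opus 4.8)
The plan is to deduce~\eqref{2.3} from a one-sided energy estimate for the difference $w:=u^--u^+$, closed off by Gronwall's inequality. Since $w\in\XX_T(I')$, it admits well-defined traces on the lines $\{t=0\}$ and $\{x=x_0\}$, $x_0\in\p I'$ (as remarked above), and hypothesis~\eqref{2.1} says precisely that these traces of $w$ are $\le0$ almost everywhere. As~\eqref{2.3} amounts to the vanishing of $w^+:=\max(w,0)$, this is what we aim to prove.

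First I would subtract inequality~\eqref{2.2} for the super-solution $u^+$ from the corresponding inequality (with $\ge$ replaced by $\le$) for the sub-solution $u^-$. The terms involving $h$ cancel, and writing $(u^-)^2-(u^+)^2=2aw$ with $a:=\tfrac12(u^++u^-)$ we obtain
\begin{equation} \label{comp-w}
\int_0^T\bigl((\p_tw,\varphi)+(\nu\p_xw-aw,\p_x\varphi)\bigr)\dd t\le0
\end{equation}
for every non-negative $\varphi\in L^\infty(J_T,L^2(I'))\cap L^2(J_T,H_0^1(I'))$. Here $a\in L^2(J_T,H^1(I'))$, so the one-dimensional embedding $H^1(I')\hookrightarrow L^\infty(I')$ yields $\|a(\cdot)\|_{L^\infty(I')}^2\in L^1(J_T)$; moreover $aw,(u^\pm)^2\in L^2(J_T,L^2(I'))$, so every integrand in~\eqref{comp-w} is well defined. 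The same inequality holds with $T$ replaced by any $\tau\in(0,T]$, since the restrictions of $u^\pm$ to $[0,\tau]$ remain super- and sub-solutions there.

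The crucial point is that $w^+$ is itself an admissible test function. Indeed, for a.e.\ $t$ one has $w^+(t)\in H^1(I')$ with $\|w^+(t)\|_1\le\|w(t)\|_1$, and the trace of $w^+(t)$ on $\p I'$ vanishes because that of $w(t)$ is $\le0$; hence $w^+\in L^2(J_T,H_0^1(I'))$. Together with $w^+\in L^\infty(J_T,L^2(I'))$ (again from the trace properties of $\XX_T(I')$) and $w^+\ge0$, this makes $w^+$ legitimate. Testing~\eqref{comp-w} over $[0,\tau]$ against $\varphi=w^+$, using the chain-rule identity $\int_0^\tau(\p_tw,w^+)\dd t=\tfrac12\|w^+(\tau)\|^2-\tfrac12\|w^+(0)\|^2$, the pointwise relations $\p_xw\,\p_xw^+=|\p_xw^+|^2$ and $ww^+=(w^+)^2$ (valid a.e.), and the fact that $w^+(0)=0$ (the trace of $w$ at $t=0$ being $\le0$), I obtain
\begin{equation} \label{comp-en}
\tfrac12\|w^+(\tau)\|^2+\nu\int_0^\tau\|\p_xw^+\|^2\dd t\le\int_0^\tau(aw^+,\p_xw^+)\dd t .
\end{equation}
Estimating the right-hand side by $\|a\|_{L^\infty}\|w^+\|\,\|\p_xw^+\|\le\nu\|\p_xw^+\|^2+\tfrac1{4\nu}\|a\|_{L^\infty}^2\|w^+\|^2$ and absorbing the gradient term, I arrive at $g(\tau)\le\tfrac1{2\nu}\int_0^\tau\|a(t)\|_{L^\infty}^2g(t)\,\dd t$ for all $\tau\in J_T$, where $g(\tau):=\|w^+(\tau)\|^2$ is continuous. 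Since $\|a(\cdot)\|_{L^\infty}^2\in L^1(J_T)$, Gronwall's lemma forces $g\equiv0$, that is, $u^-\le u^+$ a.e., which is~\eqref{2.3}.

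I expect the only genuinely delicate step to be the rigorous justification of the integration-by-parts formula $\int_0^\tau(\p_tw,w^+)\dd t=\tfrac12\|w^+(\tau)\|^2-\tfrac12\|w^+(0)\|^2$ (and, with it, the admissibility of $w^+$ as a test function), the subtlety being that elements of $\XX_T(I')$ are not required to vanish on $\p I'$, so the usual Lions--Magenes lemma for the Gelfand triple $H_0^1\hookrightarrow L^2\hookrightarrow H^{-1}$ does not apply verbatim. This is handled by a standard approximation — mollifying $w$ in time and replacing $s\mapsto s^+$ by smooth convex approximations that vanish on $(-\infty,0]$ — exactly as in Section~2.2 of~\cite{AL-1983}; every other step above is elementary.
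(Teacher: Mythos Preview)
Your argument is correct and follows a genuinely different route from the paper's. Both proofs start from the same differential inequality for $w=u^--u^+$ (which the paper calls $u$) and test it against a nonnegative function that vanishes on $\{w\le0\}$; the difference is the choice of test function. The paper tests with $\psi_\delta(w)$, where $\psi_\delta(z)=1\wedge((z/\delta)\vee0)$ is a bounded Lipschitz cut-off approximating the Heaviside function. Because $|w|\le\delta$ on the support of $\psi_\delta'(w)$, the nonlinear term picks up an extra factor of~$\delta$ and is bounded by $\frac{\delta}{8\nu}\|u^++u^-\|_{L^2(J_T\times I')}^2$; letting $\delta\to0^+$ yields $\int_{I'}w^+(T)\,\dd x\le0$ directly, with no Gronwall step. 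You instead test with $w^+$ itself, which leaves a surviving term $(aw^+,\p_xw^+)$ that must be absorbed via Young's inequality and then killed by Gronwall. Your approach works here because in one space dimension $a\in L^2(J_T,H^1(I'))\hookrightarrow L^2(J_T,L^\infty(I'))$, so $\|a(\cdot)\|_{L^\infty}^2\in L^1(J_T)$; in higher dimensions this embedding fails and the paper's $\psi_\delta$ device (the Kruzhkov--Alt--Luckhaus trick) becomes essential. Your caveat about the integration-by-parts identity $\int_0^\tau(\p_tw,w^+)\,\dd t=\tfrac12\|w^+(\tau)\|^2-\tfrac12\|w^+(0)\|^2$ applies equally to the paper, which writes $\int_0^T(\p_tu,\psi_\delta(u))\,\dd t=\int_{I'}\Psi_\delta(u(T))\,\dd x$ without further comment; in both cases this is the step that requires the approximation argument from~\cite{AL-1983}.
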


We now derive an a priori estimate for solutions of~\eqref{1}, \eqref{2}. 

\begin{corollary} \label{c2.3}
Let $u_0\in L^\infty$ and $h\in L^\infty(J_T\times I)$ for some $T>0$. Then the solution of problem~\eqref{1}--\eqref{3} with $\zeta\equiv0$ satisfies the inequality
\begin{equation} \label{2.4}
\|u(T,\cdot)\|_{L^\infty}\le C,
\end{equation}
where $C>0$ is a number continuously depending only on~$\|h\|_{L^\infty}$ and~$T$. 
\end{corollary}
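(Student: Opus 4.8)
The plan is to derive Corollary~\ref{c2.3} from the comparison principle (Proposition~\ref{p3.2}) by sandwiching the solution~$u$ between two explicit super- and sub-solutions that are spatially constant (or at worst affine in~$t$) and hence easy to control. The point is that on the interval $I=(0,1)$ itself, the boundary condition $u(t,0)=u(t,1)=0$ together with the $L^\infty$-bound $\|u_0\|_{L^\infty}\le M$ on the initial data makes constant functions natural barriers, once one has absorbed the forcing term~$h$.

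First I would set $M:=\|u_0\|_{L^\infty}$ and $N:=\|h\|_{L^\infty}$, and consider the candidate super-solution $u^+(t)=M+Nt$ and sub-solution $u^-(t)=-M-Nt$, viewed as elements of $\XX_T(I)$ (they are smooth in~$t$ and constant in~$x$, so they certainly lie in this space). For $u^+$ one checks the defining inequality~\eqref{2.2}: since $\p_xu^+=0$ and $\p_tu^+=N$, the left-hand side reduces to $\int_0^T N(1,\varphi)\,\dd t\ge\int_0^T(h,\varphi)\,\dd t$ for every non-negative test function~$\varphi$, which holds because $N\ge h$ pointwise. The analogous computation with reversed inequality handles~$u^-$. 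At $t=0$ we have $u^+(0,x)=M\ge u_0(x)\ge -M=u^-(0,x)$ a.e.\ on~$I$, and on the lateral boundary $x\in\{0,1\}$ we have $u^+(t,x)=M+Nt\ge0=u(t,x)\ge -M-Nt=u^-(t,x)$. Hence Proposition~\ref{p3.2}, applied once with the pair $(u^+,u)$ and once with $(u,u^-)$, gives
\begin{equation*}
-M-NT\le u(T,x)\le M+NT\quad\mbox{for a.e.\ }x\in I,
\end{equation*}
so that $\|u(T,\cdot)\|_{L^\infty}\le M+NT$. This already proves~\eqref{2.4} with $C=\|u_0\|_{L^\infty}+\|h\|_{L^\infty}T$, which depends continuously on $\|h\|_{L^\infty}$ and~$T$.

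There is, however, one subtlety I would address carefully: the statement of Corollary~\ref{c2.3} asserts that $C$ depends \emph{only} on $\|h\|_{L^\infty}$ and~$T$, with no dependence on $\|u_0\|_{L^\infty}$. The naive bound above does not have this property. To remove the dependence on~$M$ one must use the genuinely dissipative nature of the Burgers nonlinearity: for $t$ bounded away from~$0$ the solution forgets the size of its initial data. The clean way to see this is to compare~$u$ with a spatially nonconstant super-solution of the form $u^+(t,x)=\psi(t,x)$ built from the known explicit decaying barriers for the viscous Burgers (or heat) equation on a bounded interval; alternatively, one invokes the standard parabolic smoothing estimate that for any $\tau\in(0,T]$ one has $\|u(\tau)\|_{L^\infty}\le C(\|h\|_{L^\infty},\tau)$ regardless of $\|u_0\|_{L^\infty}$ — a fact already implicit in the regularisation remark following Proposition~\ref{p3.1}. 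Taking $\tau=T/2$, say, and then running the constant-barrier argument above on the interval $[T/2,T]$ with initial data $u(T/2)$ yields the claimed $M$-independent constant.

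The main obstacle is precisely this last point: producing a super-/sub-solution pair that both dominates~$u$ at $t=0$ for arbitrarily large $M$ and yet whose value at $t=T$ is bounded by a constant independent of~$M$. Constant barriers cannot do this on their own; one needs either the explicit self-similar solutions of Burgers that decay like $t^{-1/2}$ in sup-norm, or a two-step bootstrap through the smoothing estimate. Everything else — verifying the weak inequalities~\eqref{2.2} for the barriers and checking the boundary/initial ordering~\eqref{2.1} — is routine once the barriers are chosen.
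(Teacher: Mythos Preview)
Your constant-barrier argument is correct and gives $\|u(T,\cdot)\|_{L^\infty}\le \|u_0\|_{L^\infty}+T\|h\|_{L^\infty}$, but as you recognise, this is not the stated result. The problem is that your proposed fix is circular. The ``standard parabolic smoothing estimate'' you invoke --- that $\|u(\tau)\|_{L^\infty}\le C(\|h\|_{L^\infty},\tau)$ with no dependence on~$u_0$ --- is \emph{exactly} the content of Corollary~\ref{c2.3} (with~$\tau$ in place of~$T$). It is not implicit in the remark following Proposition~\ref{p3.1}: that remark only says that an $L^2$ initial datum produces a solution that is~$L^\infty$ for positive times, not that the $L^\infty$-norm is bounded independently of the size of~$u_0$. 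For linear parabolic equations no such bound can hold (the heat semigroup does not contract $L^\infty$ to a fixed ball), so this is a genuinely nonlinear effect of the Burgers term and is precisely what must be proved.

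Your other suggestion --- explicit decaying barriers --- is the right one, and is what the paper does. The paper takes
\[
u_\e^+(t,x)=\frac{B_\e(B_\e+x)+L\e}{t+\e},\qquad
u_\e^-(t,x)=-\frac{B_\e(B_\e-x+1)+L\e}{t+\e},
\]
with $L=\|u_0\|_{L^\infty}$ and $B_\e=1+\|h\|_{L^\infty}^{1/3}(T+\e)^{2/3}$. The factor $(t+\e)^{-1}$ comes from the scaling of the inviscid Burgers equation (the nonlinearity $u\p_xu$ dominates the diffusion here, since $\p_x^2u_\e^+=0$); the term $L\e$ in the numerator ensures that $u_\e^+(0,\cdot)\ge L\ge u_0$, yet contributes nothing to $u_\e^+(T,\cdot)$ in the limit $\e\to0^+$. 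Applying Proposition~\ref{p3.2} and letting $\e\to0^+$ gives $\|u(T,\cdot)\|_{L^\infty}\le T^{-1}B_0(B_0+1)$, which depends only on~$T$ and~$\|h\|_{L^\infty}$. You had the right instinct but did not produce the barrier; without it, the argument is incomplete.
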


\begin{proof}
We follow the argument used in the proof of Lemma~9 in~\cite[Section~2.1]{coron-2007}. Given $\e>0$ and $u_0\in L^\infty(I)$, we set 
$$
B_\e=1+\|h\|_{L^\infty}^{1/3}(T+\e)^{2/3}, \quad L=\|u_0\|_{L^\infty}. 
$$
It is a matter of a simple calculation to check that the functions
$$
u_\e^+(t,x)=\frac{B_\e(B_\e+x)+L\e}{t+\e},\quad 
u_\e^-(t,x)=-\frac{B_\e(B_\e-x)+L\e}{t+\e}
$$
are, respectively, super- and sub-solutions for~\eqref{1} on the interval~$J_T$ such that 
$$
u_\e^+(t,x)\ge u(t,x)\ge u_\e^-(t,x)\quad 
\mbox{for $t=0$, $x\in I$ and $t\in[0,T]$, $x=0$ or~$1$}. 
$$
Applying Proposition~\ref{p3.2}, we conclude that 
$$
u_\e^+(T,x)\ge u(T,x)\ge u_\e^-(T,x)\quad\mbox{for a.e.~$x\in I$}. 
$$
Passing to the limit as $\e\to0^+$, we arrive at~\eqref{2.4} with $C=T^{-1}B_0(B_0+1)$. 
\end{proof}

\subsection{Contraction of the $L^1$-norm of the difference of solutions}
It is a well known fact that the resolving operator for~\eqref{1}, \eqref{2} regarded as a nonlinear mapping in the space~$L^2(I)$ is locally Lipschitz. The following result shows that it is a contraction for the norm of~$L^1(I)$. 

\begin{proposition} \label{p3.3}
Let $u,v\in \XX$ be two solutions of Eq.~\eqref{1}, in which $\zeta\equiv0$ and $h\in L_{\mathrm{loc}}^1(\R_+,L^2)$. Then 
\begin{equation} \label{21}
\|u(t)-v(t)\|_{L^1}\le \|u(s)-v(s)\|_{L^1}\quad\mbox{for any $t\ge s\ge 0$}. 
\end{equation}
\end{proposition}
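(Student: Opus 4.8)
The plan is to use the standard Kruzhkov-type device of approximating the sign function by a smooth odd function and testing the equation satisfied by the difference $w=u-v$. First I would note that $w$ satisfies the linear parabolic equation
\[
\p_t w-\nu\p_x^2 w+\p_x\bigl(\tfrac12(u+v)\,w\bigr)=0,
\qquad w(t,0)=w(t,1)=0,
\]
obtained by subtracting the two copies of~\eqref{1} and using $u^2-v^2=(u+v)w$. By the regularity remark following Proposition~\ref{p3.1}, we may assume $u,v$ (hence $w$ and $a:=\tfrac12(u+v)$) are as smooth as needed for $t\ge\tau>0$; the estimate on $[\tau,t]$ for all $\tau>0$ then yields the estimate on $[s,t]$ by continuity of $w$ in $L^1$ (which follows from $w\in C(\R_+,H^{-1})$ together with the uniform $L^2$ bound, or more simply by the $L^2$-Lipschitz property and density), so it suffices to argue for smooth solutions.

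Next I would fix a smooth, odd, nondecreasing approximation $\sigma_\delta:\R\to[-1,1]$ of $\mathrm{sgn}$ with $\sigma_\delta'\ge0$ and $\sigma_\delta(r)\to\mathrm{sgn}(r)$ as $\delta\to0^+$, and test the equation for $w$ with $\varphi=\sigma_\delta(w)$. This gives
\[
\frac{\dd}{\dd t}\int_I S_\delta(w)\,\dd x
=-\nu\int_I \sigma_\delta'(w)\,(\p_x w)^2\,\dd x
-\int_I \p_x\bigl(a\,w\bigr)\,\sigma_\delta(w)\,\dd x,
\]
where $S_\delta'=\sigma_\delta$, $S_\delta\ge0$. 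The first term on the right is $\le0$ because $\sigma_\delta'\ge0$. For the second term, integrating by parts (the boundary terms vanish since $w=0$ at $x=0,1$) turns it into $\int_I a\,w\,\sigma_\delta'(w)\,\p_x w\,\dd x=\int_I a\,\p_x\bigl(G_\delta(w)\bigr)\dd x$, where $G_\delta'(r)=r\sigma_\delta'(r)$; integrating by parts once more yields $-\int_I \p_x a\;G_\delta(w)\,\dd x$. The key point is that $|G_\delta(r)|\le\int_0^{|r|}|\rho|\,\sigma_\delta'(\rho)\,\dd\rho\to0$ as $\delta\to0$ for each fixed $r$ (since $\sigma_\delta'$ concentrates near $0$), and $|G_\delta(r)|\le\tfrac12 r^2$ is an integrable majorant uniformly in $\delta$ on the bounded set where $w$ lives; hence by dominated convergence $\int_I\p_x a\;G_\delta(w)\,\dd x\to0$. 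Also $\p_x a=\tfrac12\p_x(u+v)\in L^\infty_{\mathrm{loc}}$ on $[\tau,t]$ by the regularity of solutions, so this step is legitimate.

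Passing to the limit $\delta\to0^+$ in the integrated inequality $\int_I S_\delta(w(t))\,\dd x\le\int_I S_\delta(w(\tau))\,\dd x+\int_\tau^t\!\Bigl(\text{error}_\delta\Bigr)\dd r$, and using $S_\delta(r)\to|r|$ pointwise with the uniform bound $S_\delta(r)\le\tfrac12 r^2$, we obtain $\|w(t)\|_{L^1}\le\|w(\tau)\|_{L^1}$ for all $t\ge\tau>0$. Finally, letting $\tau\to0^+$ and using the continuity $\tau\mapsto\|w(\tau)\|_{L^1}$ at $\tau=s$ (recall $w\in C([0,\infty),L^2)\hookrightarrow C([0,\infty),L^1)$) gives~\eqref{21}. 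The main obstacle is making the smooth-approximation argument for the nonlinear term rigorous under the minimal regularity $h\in L^1_{\mathrm{loc}}(\R_+,L^2)$, i.e.\ controlling $\int\p_x a\,G_\delta(w)$; this is handled precisely by exploiting the $H^3_{\mathrm{loc}}$-regularity of solutions for $t>0$ (so $\p_x a$ is bounded) and the vanishing of $G_\delta$ in the limit, which is why the estimate is first established on $[\tau,t]$ and then extended to $\tau=0$ by continuity.
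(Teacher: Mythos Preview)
Your Kruzhkov-type entropy argument is a valid alternative, but the paper takes a quite different route: it argues by \emph{duality}. Setting $a=\tfrac12(u+v)$, the paper considers the backward adjoint equation $\p_t z+\nu\p_x^2 z+a\,\p_x z=0$ with terminal data $z(T)=z_0$, for which the classical maximum principle gives $\|z(0)\|_{L^\infty}\le\|z_0\|_{L^\infty}$. Since $\frac{\dd}{\dd t}(w(t),z(t))=0$, one obtains $(w(T),z_0)=(w(0),z(0))\le\|w(0)\|_{L^1}\|z_0\|_{L^\infty}$, and taking the supremum over $\|z_0\|_{L^\infty}\le1$ yields~\eqref{21}. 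This requires only $a\in L^2(J_T,H^1)$, which follows directly from $u,v\in\XX$.

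Your direct approach also works, but two points need repair. First, the majorants $|G_\delta(r)|\le\tfrac12 r^2$ and $S_\delta(r)\le\tfrac12 r^2$ are not correct (for the standard choice $\sigma_\delta'(0)\sim\delta^{-1}$ they fail near the origin); the right uniform bounds are $|G_\delta(r)|\le C\delta$ when $\sigma_\delta'$ is supported in $[-\delta,\delta]$, and $0\le S_\delta(r)\le|r|$, both of which suffice for dominated convergence. Second, and more importantly, your resolution of the ``main obstacle'' invokes the $H^3$-regularity from Proposition~\ref{p3.1} to obtain $\p_x a\in L^\infty$ for $t\ge\tau>0$, but that regularity result requires $h\in H^1_{\mathrm{loc}}$, whereas Proposition~\ref{p3.3} assumes only $h\in L^1_{\mathrm{loc}}(\R_+,L^2)$. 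Fortunately the fix is simpler than your detour through positive times: from $u,v\in\XX$ alone one already has $\p_x a\in L^2(J_T\times I)\subset L^1(J_T\times I)$, and combined with $|G_\delta|\le C\delta$ the error term satisfies $\int_s^t\!\int_I|\p_x a|\,|G_\delta(w)|\,\dd x\,\dd r\le C\delta\,\|\p_x a\|_{L^1}\to0$. The duality proof sidesteps this bookkeeping entirely; your approach, once patched, has the advantage of being self-contained and not requiring well-posedness of the backward problem.
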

Inequality~\eqref{21} follows from the maximum principle for linear parabolic PDE's, and more general results can be found in Sections~3.2 and~3.3 of~\cite{hormander1997}. A simple proof of Proposition~\ref{p3.3} is given in Section~\ref{A3}. 

\subsection{Harnack inequality}
\label{s2.4}
Let us consider the linear homogeneous equation~\eqref{A.31}. The following result is a particular case of the Harnack inequality established in~\cite[Theorem~1.1]{KS-1980} (see also Section~IV.2 in~\cite{krylov1987}). 

\begin{proposition} \label{p2.6}
Let a closed interval $K\subset I$ and positive numbers~$\nu$ and~$T$ be fixed. Then, for any $\rho>0$ and $T'\in(0,T)$, one can find $C>0$ such that the following property holds: if $a(t,x)$ satisfies the inequality
\begin{equation} \label{1.9}
\|a\|_{L^\infty(J_T\times I)}+\|\p_xa\|_{L^\infty(J_T\times I)}\le\rho,
\end{equation}
then for any non-negative solution  $w\in L^2(J_T,H^3\cap H_0^1)\cap W^{1,2}(J_T,H_0^1)$ of~\eqref{A.31} we have
\begin{equation} \label{2.6}
\sup_{x\in K} w(T',x)\le C\inf_{x\in K}w(T,x). 
\end{equation}
\end{proposition}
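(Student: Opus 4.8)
\textbf{Proof plan for Proposition~\ref{p2.6}.}
The statement is explicitly identified as a particular case of the Harnack inequality of Krylov--Safonov~\cite{KS-1980}, so the plan is to reduce it to that reference rather than to reprove a parabolic Harnack inequality from scratch. First I would rewrite Eq.~\eqref{A.31} in non-divergence form: expanding $\p_x(aw)=a\,\p_xw+(\p_xa)w$, the function $w$ solves
\begin{equation} \label{p26-plan-1}
\p_tw-\nu\p_x^2w+a(t,x)\,\p_xw+\bigl(\p_xa(t,x)\bigr)w=0
\qquad\text{on }J_T\times I.
\end{equation}
This is a uniformly parabolic equation with principal coefficient $\nu>0$ (so the ellipticity constants are fixed in terms of $\nu$), bounded first-order coefficient $a$, and bounded zero-order coefficient $\p_xa$; by hypothesis~\eqref{1.9} both are controlled by $\rho$. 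The regularity class $w\in L^2(J_T,H^3\cap H_0^1)\cap W^{1,2}(J_T,H_0^1)$ is more than enough to guarantee that $w$ is a strong (a.e.) solution of~\eqref{p26-plan-1} to which the Krylov--Safonov estimate applies; if needed I would note that by Sobolev embedding in one space dimension $w$ is continuous on $\overline{J_T\times I}$, which makes the pointwise $\sup$ and $\inf$ in~\eqref{2.6} meaningful.

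The second step is to set up the geometry so that the cylindrical Harnack inequality from~\cite[Theorem~1.1]{KS-1980} can be invoked on $K$. Since $K$ is a closed subinterval of the open interval $I=(0,1)$, we have $\dist(K,\p I)=:d>0$; fix an open interval $I_1$ with $K\subset I_1\Subset I$, say $I_1$ the $d/2$-neighbourhood of $K$ intersected with $I$. On a parabolic subcylinder of $(0,T)\times I_1$ the classical interior Harnack inequality for non-negative solutions of uniformly parabolic equations in non-divergence form with bounded lower-order coefficients gives a constant $C$, depending only on the ellipticity (hence on $\nu$), on the bound $\rho$ for the lower-order coefficients, on the sizes of $K$, $I_1$, $T$, $T'$, and on the time gap $T-T'>0$, such that any non-negative solution satisfies $\sup_{x\in K}w(T',x)\le C\,\inf_{x\in K}w(T,x)$. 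Here the ordering $T'<T$ is exactly the forward-in-time direction required by the parabolic Harnack inequality: the supremum is taken at the earlier time $T'$ and the infimum at the later time $T$. One applies the reference on finitely many overlapping small space-time cylinders covering $K\times\{T',T\}$ (or directly on one cylinder if $K$ is short enough), and chains the resulting inequalities; the number of cylinders needed depends only on the listed quantities, so the final constant $C$ does too.

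The only genuine point requiring care — and the step I expect to be the main obstacle — is the \emph{uniformity} of the constant $C$ with respect to the coefficient $a$: the statement asks for a single $C$ that works for every $a$ satisfying~\eqref{1.9}, not merely a constant depending on the particular $a$. This is precisely the strength of the Krylov--Safonov theory (as opposed to, say, Schauder theory), where the Harnack constant depends on the coefficients only through the ellipticity constants and the $L^\infty$-bounds (in one space dimension, the $L^\infty$ bound on the drift; the $L^{n+1}$-type condition on the drift reduces to an $L^\infty$ bound here), together with the zero-order term's bound. I would therefore: (i) cite the exact form of~\cite[Theorem~1.1]{KS-1980} (or the formulation in~\cite[Section~IV.2]{krylov1987}) and observe that its hypotheses are met by~\eqref{p26-plan-1} under~\eqref{1.9}, with all structural constants bounded in terms of $\nu$ and $\rho$ only; (ii) handle the zero-order coefficient $\p_xa$ either by the version of Krylov--Safonov that already allows a bounded zero-order term, or by the standard device of multiplying $w$ by $e^{-\lambda t}$ with $\lambda=\lambda(\rho)$ large enough to make the zero-order coefficient of the transformed equation non-positive (so that $e^{-\lambda t}w$ is a non-negative supersolution of an equation with no zero-order term, to which the cleanest form applies, and the exponential factors $e^{-\lambda T'}$, $e^{-\lambda T}$ are then absorbed into $C$); (iii) assemble the covering argument of the previous paragraph. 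No delicate estimate beyond these citations and bookkeeping is needed, which is consistent with the paper's remark that the result "is a particular case" of a known inequality.
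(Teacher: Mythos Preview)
Your proposal is correct and matches the paper's approach: the paper gives no proof of its own, stating only that the result ``is a particular case of the Harnack inequality established in~\cite[Theorem~1.1]{KS-1980} (see also Section~IV.2 in~\cite{krylov1987}).'' Your plan simply fills in the routine details (non-divergence form, interior geometry, uniformity in the coefficients via the $L^\infty$ bounds, handling of the zero-order term) that the paper leaves implicit in that citation.
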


\section{Proof of the main results}
\label{s4}

In this section, we give the details of the proof of Theorem~\ref{t2.1} (Sections~\ref{s3.1}--\ref{s3.3}) and establish assertion~(b) of the Main Theorem stated in the Introduction (Section~\ref{s3.4}). 

\subsection{Reduction to smooth initial data}
\label{s3.1}
Let us prove Proposition~\ref{p1.2}. Fix arbitrary numbers $T_1<T_2$ in the interval $(0,T)$. By Proposition~\ref{p3.1} and the remark following it, for any $\tau>0$ we have 
\begin{equation} \label{3.01}
u\in L^\infty(J_{\tau,T}\times I)\cap 
L^2(J_{\tau,T},H^3)\cap W^{1,2}(J_{\tau,T},H_0^1)\cap W^{2,2}(J_{\tau,T},H^{-1}),
\end{equation}
where $J_{\tau,T}=[\tau,T]$. Applying Corollary~\ref{c2.3}, we see that 
\begin{equation} \label{3.02}
\|u(t,\cdot)\|_{L^\infty}\le C\quad\mbox{for $T_1\le t\le T$}. 
\end{equation}
Furthermore, it follows from~\eqref{3.01} that $u(t)$ is a continuous function of~$t\in (0,T]$ with range in~$V$. Thus, it remains to establish inequality~\eqref{1.3} with a universal constant~$R$. The proof of this fact can be carried out by a standard argument based on multipliers technique (e.g., see the proof of  Theorem~2 in~\cite[Section~I.6]{BV1992} dealing with the 2D Navier--Stokes system). Therefore, we confine ourselves to outlining the main steps. Until the end of this subsection, we deal with Eq.~\eqref{1} in which~$\zeta\equiv0$ and denote by~$C_i$ unessential positive numbers not depending~$u$. 

\medskip
{\it Step~1: Mean $H^1$-norm}.
Taking the scalar product of~\eqref{1} with~$2u$ and performing usual transformations, we derive 
$$
\p_t\|u\|^2+2\nu\|\p_xu\|^2=2(h,u)\le \nu\|\p_xu\|^2+\nu^{-1}\|h\|^2. 
$$
Integrating in time and using~\eqref{3.02} with $t=T_1$, we  obtain
\begin{equation} \label{3.03}
\int_{T_1}^T\|\p_xu\|^2\dd t\le \nu^{-1}\|u(T_1)\|^2+\nu^{-2}\int_{T_1}^T\|h\|^2\dd t\le C_1. 
\end{equation}

\smallskip
{\it Step~2: $H^1$-norm and mean $H^2$-norm}.
Let us take the scalar product of~\eqref{1} with $-2(t-T_1)\p_x^2u$: 
\begin{multline*}
\p_t\bigl((t-T_1)\|\p_xu\|^2\bigr) -\|\p_x u\|^2+2\nu(t-T_1)\|\p_x^2u\|^2
= 2(t-T_1)(u\p_xu-h,\p_x^2u)\\
\le 2(t-T_1)\bigl(\|u\|_{L^\infty}\|\p_xu\|+\|h\|\bigr)\|\p_x^2u\|. 
\end{multline*}
Integrating in time and using~\eqref{3.02} and~\eqref{3.03}, we obtain
\begin{equation} \label{3.04}
\|u(t)\|_1+\int_{T_2}^t\|u(r)\|_2^2\dd r\le C_2\quad\mbox{for $T_2\le t\le T$}.
\end{equation}
Using~\eqref{1}, we also derive the following estimate for~$\p_tu$:
\begin{equation} \label{3.05}
\int_{T_2}^T\|\p_tu\|^2\dd t\le C_3. 
\end{equation}

\smallskip
{\it Step~3: $L^2$-norm of the time derivative}.
Taking the time derivative of~\eqref{1}, we obtain the following equation for $v=\p_tu$:
$$
\p_tv-\nu\p_x^2v+v\p_xu+u\p_xv=\p_th.
$$
Taking the scalar product with $2(t-T_2)v$, we derive
\begin{align*}
\p_t\bigl((t-T_2)\|v\|^2\bigr) -\|v\|^2+2\nu(t-T_2)\|\p_xv\|^2
&= 2(t-T_2)(\p_th-u\p_xv-v\p_xu,v)\\
&\le 2(t-T_2)\bigl(\|\p_th\|+3\|u\|_{L^\infty}\|\p_xv\|\bigr)\|v\|. 
\end{align*}
Integrating in time and using~\eqref{3.02} and~\eqref{3.05}, we obtain
\begin{equation} \label{3.06}
\|v(T)\|\le C_4. 
\end{equation}

\smallskip
{\it Step~4: $H^2$-norm}.
We now rewrite~\eqref{1} in the form
\begin{equation} \label{3.07}
\nu\p_x^2u=f(t):=v+u\p_xu-h. 
\end{equation}
In view of~\eqref{3.04} and~\eqref{3.06}, we have $\|f(T)\|\le C_5$. Combining this with~\eqref{3.07}, we arrive at the required inequality~\eqref{1.3}. 

\begin{remark} \label{r3.1}
The argument given above shows that, under the hypotheses of Proposition~\ref{p1.2}, if $u_0\in B_V(\rho)$, then $\|R_t(u_0,h)\|_2\le R$ for all $t\ge0$, where $R>0$ depends only on~$h$, $\nu$, and~$\rho$. Moreover, similar calculations enable one to prove that, for any $t>0$, the resolving operator $\RR_t(u_0,h)$ regarded as a function of~$u_0$ is uniformly Lipschitz continuous from any ball of~$L^2$ to~$H^2$, and the corresponding Lipschitz constant can be chosen to be the same for $T^{-1}\le t\le T$, where $T>1$ is an arbitrary number. 
\end{remark}

\subsection{Proof of the main auxiliary result}
\label{s3.2}
In this subsection, we prove Proposition~\ref{p1.4}. In doing so, we fix parameter~$\nu>0$ and do not follow the dependence of various quantities on it. 

\medskip
{\it Step~1}. We begin with the case of non-negative solutions. Namely, we prove that, given $q\in(0,1)$, one can find $\delta=\delta(I',T,q,\rho)>0$ such that, if $w\in\XX_T$ is a non-negative solution of~\eqref{A.31}, then either the first inequality in~\eqref{1.10} holds, or 
\begin{equation} \label{3.08}
\inf_{x\in I'}w(T,x)\ge \delta \|w(0)\|_{L^1}. 
\end{equation}
To this end, we shall need the following lemma, established at the end of this subsection. 

\begin{lemma} \label{l3.1}
For any $0<\tau<T$ and $\rho>0$, there is $M>0$ such that, if~$w\in\XX_T$ is a solution of Eq.~\eqref{A.31} with a function~$a(t,x)$ satisfying~\eqref{1.09}, then 
\begin{equation}  \label{3.09}
\sup_{(t,x)\in[\tau,T]\times I}|w(t,x)|\le M\|w(0)\|_{L^1}. 
\end{equation}
\end{lemma}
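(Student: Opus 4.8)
The plan is to derive the bound in two stages: first a smoothing estimate passing from $L^1$ data at time $0$ to an $L^2$ (or $L^\infty$) bound at an intermediate time $\tau/2$, and then a parabolic regularity/maximum-principle argument on $[\tau/2,T]$ giving the pointwise bound. The key structural fact is that Eq.~\eqref{A.31} is linear in $w$, so every estimate will be homogeneous of degree one in $\|w(0)\|_{L^1}$; this is what lets the final constant $M$ depend only on $\tau$, $T$, and $\rho$ (through the bound~\eqref{1.09} on $a$ and $\p_x a$), and not on $w$ itself.

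First I would treat the instantaneous smoothing from $L^1$ to $L^2$. Rewriting \eqref{A.31} as $\p_t w-\nu\p_x^2 w = -\p_x(aw)$ and using the variation-of-constants formula against the Dirichlet heat semigroup $e^{t\nu\p_x^2}$ on $I$, one has $w(t)=e^{t\nu\p_x^2}w(0)-\int_0^t \p_x e^{(t-r)\nu\p_x^2}\bigl(a(r)w(r)\bigr)\,\dd r$. The standard kernel bounds $\|e^{t\nu\p_x^2}\|_{L^1\to L^2}\lesssim t^{-1/4}$ and $\|\p_x e^{t\nu\p_x^2}\|_{L^2\to L^2}\lesssim t^{-1/2}$, together with $\|a(r)w(r)\|\le \rho\,\|w(r)\|$, feed into a singular Gronwall (Picard) iteration on $[0,\tau/2]$ and yield $\sup_{\tau/4\le t\le \tau/2}\|w(t)\|\le C(\tau,\rho)\,\|w(0)\|_{L^1}$. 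Alternatively — and perhaps more in the spirit of the rest of the paper — one can run an energy estimate: multiply \eqref{A.31} by $w$, integrate by parts to get $\tfrac12\p_t\|w\|^2+\nu\|\p_x w\|^2=\int aw\,\p_x w\le \tfrac\nu2\|\p_x w\|^2+C\rho^2\|w\|^2$, giving $\p_t\|w\|^2\le C\|w\|^2$; combined with the crude parabolic smoothing $\|w(t)\|\lesssim t^{-1/4}\|w(0)\|_{L^1}$ valid for the perturbed equation (again via the Duhamel/iteration argument, since the drift term is lower order), this produces the same intermediate $L^2$ bound.

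Next, on the interval $[\tau/4,T]$ I would upgrade the $L^2$ bound to the pointwise bound~\eqref{3.09}. Taking the scalar product of \eqref{A.31} with $-(t-\tau/4)\p_x^2 w$ and handling the term $\int \p_x(aw)\,\p_x^2 w$ with the hypothesis $\|\p_x a\|_{L^\infty}\le\rho$ (so $\p_x(aw)=a\p_x w+(\p_x a)w$, both controlled in $L^2$ by $\|w\|_1$) gives, after integration in time and absorption, a bound on $\sup_{\tau/2\le t\le T}\|w(t)\|_1$ and on $\int\|w\|_2^2$ in terms of the intermediate $L^2$ norm, hence in terms of $\|w(0)\|_{L^1}$. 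Since in dimension one $H^1(I)\hookrightarrow C(\bar I)$, the $H^1$ bound already yields $\sup_{[\tau,T]\times I}|w|\le M\|w(0)\|_{L^1}$ with $M=M(\tau,T,\rho)$, which is exactly \eqref{3.09}. (One could instead invoke De~Giorgi--Nash--Moser or the interior parabolic $L^\infty$ bound that underlies the Harnack inequality of Proposition~\ref{p2.6}, but the elementary $H^1$ route suffices here because we are in one space dimension and $w$ vanishes on $\p I$.)

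The main obstacle is the first stage: Eq.~\eqref{A.31} has a first-order drift term $\p_x(aw)$ whose size relative to the $L^1\to L^2$ smoothing of the heat semigroup is borderline, so one must be careful that the Picard iteration converges on a short interval and that the resulting constant depends only on $\rho$ and the length of that interval — not on higher norms of $a$ or on $w$. The hypothesis that $a\in C^s$ is not actually needed for this lemma (only $\|a\|_{L^\infty}$ and $\|\p_x a\|_{L^\infty}$ enter); the Hölder regularity is there for the Harnack inequality in the subsequent step. Once the smoothing estimate is in place, the remaining steps are routine energy estimates of the type already carried out in Section~\ref{s3.1}.
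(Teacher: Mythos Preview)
Your proof is correct, but the route is genuinely different from the paper's.

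The paper proceeds by duality. It reduces~\eqref{3.09} to the single-time bound $\|w(\tau)\|_{L^\infty}\le C_1\|w(0)\|_{L^1}$ (the extension to $[\tau,T]$ being a maximum-principle consequence), and then, instead of estimating~$w$ directly, it considers the \emph{backward adjoint} problem~\eqref{A.32},~\eqref{3.17}. Gaussian upper bounds on the Green function of that equation (Theorem~16.3 in Chapter~IV of~\cite{LSU1968}, which is where the $C^s$ hypothesis on~$a$ is actually used) give $\|z(0)\|_{L^\infty}\le C\|z_0\|_{L^1}$. Since the pairing $t\mapsto (w(t),z(t))$ is conserved, testing against $z_0\in L^2$ with $\|z_0\|_{L^1}\le 1$ yields $\|w(\tau)\|_{L^\infty}\le C\|w(0)\|_{L^1}$ immediately.

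Your two-stage argument (Duhamel/Picard for $L^1\to L^2$ smoothing, then weighted energy estimates for $L^2\to H^1\hookrightarrow L^\infty$) works and is more self-contained: it avoids quoting the LSU Green-function bound and, as you observe, uses only $\|a\|_{L^\infty}+\|\p_xa\|_{L^\infty}\le\rho$ rather than H\"older regularity of~$a$. One point worth making explicit in your Stage~1: the Picard iteration closes precisely because the zeroth iterate $e^{t\nu\p_x^2}w(0)$ already obeys the $t^{-1/4}\|w(0)\|_{L^1}$ bound, so each subsequent term picks up an extra factor of $t^{1/2}$ times Beta-function constants and the series sums; a bare ``singular Gronwall'' phrased as a closed inequality for $\|w(t)\|$ would need the a priori continuity of $t\mapsto\|w(t)\|$ coming from $w\in\XX_T$. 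Alternatively, one can shortcut Stage~1 entirely by invoking the $L^1$-contraction for~\eqref{A.31} (Remark~\ref{r4.2}): since $\|w(r)\|_{L^1}\le\|w(0)\|_{L^1}$ for all $r$, the Duhamel integral is bounded using only $\|e^{s\nu\p_x^2}\p_x\|_{L^1\to L^2}\lesssim s^{-3/4}$, with no iteration needed. The paper's duality argument buys brevity at the cost of an external reference; yours buys a slightly sharper hypothesis at the cost of more computation.
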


In view of linearity, we can assume without loss of generality that~$\|w(0)\|_{L^1}=1$. Let us choose a closed interval $K\subset I$ containing~$I'$ such that 
\begin{equation}  \label{3.010}
|I\setminus K|\le\frac{q}{2M},
\end{equation}
where $|\Gamma|$ denotes the Lebesgue measure of a set $\Gamma\subset\R$, and $M>0$ is the constant in~\eqref{3.09} with $\tau=2T/3$. By Proposition~\ref{p3.1} and the remark following it, the function~$w$ satisfies the hypotheses of Proposition~\ref{p2.6}. Therefore, by the Harnack inequality~\eqref{2.6}, we have 
\begin{equation} \label{3.011}
\sup_{x\in K}w(2T/3,x)\le C\inf_{x\in K}w(T,x),
\end{equation}
where $C>0$ depends only on~$T$, $K$, and~$\rho$. Let us set $\delta=\frac{q}{2C|K|}$ and suppose that~\eqref{3.08} is not satisfied. In this case, using~\eqref{3.09}--\eqref{3.011} and the contraction of the $L^1$-norm of solutions for~\eqref{A.31} (see  Remark~\ref{r4.2}), we derive
\begin{align*}
\|w(T)\|_{L^1}&\le \|w(2T/3)\|_{L^1}
=\int_{I\setminus K}w(2T/3,x)\dd x+\int_{K}w(2T/3,x)\dd x\\
&\le M\,|I\setminus K|+C\delta |K|\le q. 
\end{align*}
This is the first inequality in~\eqref{1.10} with $\|w(0)\|_{L^1}=1$. 

\smallskip
{\it Step~2}. We now consider the case of arbitrary solutions $w\in\XX_T$, assuming again that $\|w(0)\|_{L^1}=1$. Let us denote by~$w_0^+$ and~$w_0^-$ the positive and negative parts of $w_0:=w(0)$, and let~$w^+$ and~$w^-$ be the solutions of~\eqref{A.31} issued from~$w_0^+$ and~$w_0^-$, respectively. Thus, we have 
$$
w_0=w_0^+-w_0^-, \quad 
\|w_0^+\|_{L^1}+\|w_0^-\|_{L^1}=1, \quad 
w=w^+-w^-. 
$$
Let us set $r:=\|w_0^+\|_{L^1}$ and assume without loss of generality that $r\ge1/2$. In view of the maximum principle for linear parabolic equations (see Section~3.2 in~\cite{landis1998}), the functions~$w^+$ and~$w^-$ are non-negative, and therefore the property established in Step~1 is true for them.  If $\|w^+(T)\|_{L^1}\le r/2$, then the contraction of the $L^1$-norm of solutions of~\eqref{A.31} implies that
$$
\|w(T)\|_{L^1}\le \|w^+(T)\|_{L^1}+\|w^-(T)\|_{L^1}\le r/2+(1-r)\le 3/4. 
$$
This coincides with the first inequality in~\eqref{1.10} with $\|w(0)\|_{L^1}=1$.

Suppose now that $\|w^+(T)\|_{L^1}> r/2$. Using the property of Step~1 with $q=\frac12$, we find $\delta_1>0$ such that 
\begin{equation} \label{2.36}
\inf_{x\in Q}w^+(T,x)\ge\delta_1 r. 
\end{equation}
Set $\e=\frac14\delta_1|I'|$ and assume that $\|w(T)\|_{L^1(I')}<\e$ (in the opposite case, the second inequality in~\eqref{1.10} holds), so that
$$
\|w^+(T)\|_{L^1(I')}-\|w^-(T)\|_{L^1(I')}<\e.
$$
It follows that
$$
\|w^-(T)\|_{L^1}\ge \|w^-(T)\|_{L^1(I')}\ge \|w^+(T)\|_{L^1(I')}-\e
\ge \delta_1 r |I'|-\frac{\delta_1}{4}|I'| \ge\e. 
$$
By the $L^1$-contraction for~$w^-$, we see that $\|w_0^-\|_{L^1}=1-r\ge \e$.  Repeating the argument applied above to~$w^+$, we can prove that if 
\begin{equation} \label{2.37}
\|w^-(T)\|_{L^1}\le \frac12(1-r),
\end{equation}
then $\|w(T)\|_{L^1}\le 1-\frac\e2$, so that the first inequality in~\eqref{1.10} holds with $q=1-\frac\e2$. Thus, it remains to consider the case when~\eqref{2.37} does not hold.  Applying the property of Step~1 to~$w^-$, we find~$\delta_2>0$ such that 
\begin{equation} \label{2.38}
\inf_{x\in I'}w^-(T,x)\ge\delta_2 (1-r). 
\end{equation}
Since $\frac12\le r\le 1-\e$, the right-hand sides in~\eqref{2.36} and~\eqref{2.38} are minorised by $\theta=\min\{\tfrac12\delta_1,\e\delta_2\}$. Denoting by~$\chi_{I'}$ the indicator function of~$I'$, we write
\begin{align*}
\|w(T)\|_{L^1}&=\int_I|w^+(T,x)-w^-(T,x)|\,\dd x\\
&=\int_I\bigl|(w^+(T,x)-\theta \chi_{I'}(x))-(w^-(T,x)-\theta \chi_{I'}(x))\bigr|\,\dd x\\
&\le\int_I\bigl(w^+(T,x)-\theta \chi_{I'}(x)\bigr)\,\dd x
+\int_I\bigl(w^-(T,x)-\theta \chi_{I'}(x))\bigr)\,\dd x\\
&=\|w^+(T)\|_{L^1}+\|w^-(T)\|_{L^1}-2\theta |I'|.
\end{align*}
In view of the $L^1$-contraction for~$w^+$ and~$w^-$, the right-hand side of this inequality does not exceed
$$
 \|w_0^+\|_{L^1}+\|w_0^-\|_{L^1}-2\theta |I'|=1-2\theta |I'|. 
$$
Setting $q=\max\{\frac34,1-\frac{\e}{2},1-2\theta |I'|\}$, we conclude that one of the inequalities~\eqref{1.10} holds for~$w$. Thus, to complete the proof of Proposition~\ref{p1.4}, it only remains to establish Lemma~\ref{l3.1}.

\begin{proof}[Proof of Lemma~\ref{l3.1}]
By the maximum principle and regularity of solutions for linear parabolic equations, it suffices to prove that
\begin{equation} \label{3.15}
\|w(\tau)\|_{L^\infty(I)}\le C_1\|w(0)\|_{L^1(I)},
\end{equation}
where $C_1>0$ does not depend on~$w$. To this end, along with~\eqref{A.31}, let us consider the dual equation
\begin{equation} \label{A.32}
\p_t z+\nu\p_x^2z+a(t,x)\p_xz=0, 
\end{equation}
supplemented with the initial condition
\begin{equation} \label{3.17}
z(T,x)=z_0(x). 
\end{equation}
Let us denote by $G(t,x,y)$ the Green function of the Dirichlet problem for~\eqref{A.32}, \eqref{3.17}. By Theorem~16.3 in~\cite[Chapter~IV]{LSU1968}, one can find positive numbers~$C_2$ and~$C_3$ depending only on~$\rho$, $s$, and~$T$ such that 
\begin{equation*} 
|G(t,x,y)|\le C_2(T-t)^{-1/2}\exp\bigl(-C_3\tfrac{(x-y)^2}{T-t}\bigr)
\quad\mbox{for $x,y\in I$, $t\in[0,T)$}. 
\end{equation*}
It follows that, for $z_0\in L^2(I)$, the solution $z\in\XX_T$ of problem~\eqref{A.32}, \eqref{3.17} satisfies the inequality
\begin{equation} \label{3.19}
\|z(0)\|_{L^\infty}\le C_4\|z_0\|_{L^1},
\end{equation}
where $C_4>0$ does not depend on~$z_0$. 

Now let $u\in\XX_T$ be a solution of~\eqref{A.31}. Taking any $z_0\in L^2(I)$ and denoting by~$z\in\XX_T$ the solution of~\eqref{A.32}, \eqref{3.17}, we write
\begin{equation} \label{3.18}
\frac{\dd}{\dd t}\bigl(w(t),z(t)\bigr)=(\p_tw,z)+(w,\p_tz)=0,
\end{equation}
where $(\cdot,\cdot)$ denotes the scalar product in~$L^2(I)$. Integrating in time and using~\eqref{3.19}, we obtain
$$
\int_Iw(T)z_0\dd x=\int_Iw(0)z(0)\dd x\le \|w(0)\|_{L^1}\|z(0)\|_{L^\infty}
\le C_4 \|w(0)\|_{L^1}\|z_0\|_{L^1}. 
$$
Taking the supremum over all $z_0\in L^2$ with $\|z_0\|_{L^1}\le 1$, we arrive at the required inequality~\eqref{3.15}. 
\end{proof}

\subsection{Completion of the proof}
\label{s3.3}
We need to prove inequalities~\eqref{1.4} and~\eqref{1.5}, as well as the piecewise continuity of $\zeta:\R_+\to H^1(I)$ and the estimate
\begin{equation} \label{3.21}
\|\zeta(t)\|_1\le C_1 e^{-\gamma t}
\min\bigl(\|u_0-\hat u_0\|_{L^1}^{2/5},1\bigr), \quad t\ge0.
\end{equation}

{\it Proof of~\eqref{1.4}}. 
The estimate for $\hat u(t)=\RR_t(\hat u_0,h)$ follows from Remark~\ref{r3.1}. Setting $t_k=2k$, we now use induction on~$k\ge0$ to prove that $u(t)=\RR_t(u_0,h+\zeta)$ is bounded on $[t_k,t_{k+1}]$ by a universal constant and that $u(t_{k+1})\in B_V(R)$, provided that $u(t_k)\in B_V(R)$. Indeed, it follows from~\eqref{1.13} that 
$$
\sup_{t_k\le t\le s_k}\|u(t)\|_2\le C_2\sup_{t_k\le t\le s_k}
\bigl(\|\hat u(t)\|_2+\|v(t)\|_2\bigr), 
$$
where $s_k=2k+1$. In view of Remark~\ref{r3.1}, the right-hand side of this inequality does not exceed a constant~$C_3(R)$. Furthermore, recalling~\eqref{1.16} and using Remark~\ref{r3.1} and inequality~\eqref{1.3} with $T=1$, we see that 
$$
\sup_{s_k\le t\le t_{k+1}}\|u(t)\|_2\le C_3(R), \quad \|u(t_{k+1})\|_2\le R.
$$
This completes the induction step. 

\smallskip
{\it Proof of~\eqref{1.5}}. 
In view of~\eqref{1.15}, it suffices to establish~\eqref{1.14} for any even integer $k\ge0$.  It follows from~\eqref{1.13}, \eqref{1.12}, and the definition of~$\chi$ that 
\begin{equation} \label{3.22}
\|u(k+1)-\hat u(k+1)\|_{L^1}=\int_I\chi_0(x)|v(k+1)-\hat u(k+1)|\,\dd x.
\end{equation}
We know that the norms of the functions~$v$ and~$\hat u$ are bounded in $L^\infty([k,k+1],H^2)$ by a constant depending only on~$R$. Since they satisfy Eq.~\eqref{1} with~$\zeta\equiv0$, we see that~$\p_tv$ and~$\p_t\hat u$ are bounded in $L^\infty([k,k+1],L^2)$ by a number depending on~$R$. By interpolation and the continuous embedding $H^1(I)\subset C^{1/2}(I)$, we see that 
$$
\|v\|_{C^{1/2}([k,k+1]\times I)}+\|\hat u\|_{C^{1/2}([k,k+1]\times I)}\le C_4(R).
$$
Since the difference $w=v-\hat u$ satisfies Eq.~\eqref{A.31} with $a=\frac12(v+\hat u)$, we conclude that Proposition~\ref{p1.4} is applicable to~$w$. Thus, we have one of the inequalities~\eqref{1.10}. If the first of them is true, then it follows from~\eqref{3.22} that~\eqref{1.14} holds with $\theta=q$. If the second inequality is true, then using~\eqref{3.22}, the contraction of the $L^1$-norm for~$w$, and relations~\eqref{1.11}, we derive 
\begin{align*}
\|u(k+1)-\hat u(k+1)\|_{L^1}&\le \|w(k+1)\|_{L^1}-\|w(k+1)\|_{L^1(I')}
\le (1-\e)\|w(0)\|_{L^1},
\end{align*}
and, hence, we obtain~\eqref{1.14} with $\theta=1-\e$. 

\smallskip
{\it Proof of the properties of~$\zeta$}. 
In view of~\eqref{1.16}, on any interval $[k,k+1]$ with odd~$k\ge0$, the function~$u$ satisfies~\eqref{1} with~$\zeta\equiv0$, and the required properties of~$\zeta$ are trivial. Let us consider the case of an even~$k\ge0$. A direct calculation show that
\begin{align*}
\zeta(t,x)&=\p_tu-\nu\p_x^2u+u\p_xu-h\\
&=-\bigl(\chi_k(1-\chi_k)w+2\nu\p_x\chi_k\bigr)\p_xw
+\bigl(\p_t\chi_k-\nu\p_x^2\chi_k+\hat u\p_x\chi_k+\chi_k w\p_x\chi_k\bigr)w,
\end{align*}
where $\chi_k(t,x)=\chi(t-k,x)$. 
Since $\chi(t,x)=1$ for $x\notin[a,b]$ and for $t\le\frac12$, we have 
$\supp\zeta\subset[k+\frac12,k+1]\times [a,b]$. By Proposition~\ref{p3.1}, $v$ and~$\hat u$ are $V$-valued continuous functions, whence we conclude that~$\zeta$ is continuous in time with range in~$H_0^1$. Moreover, since the $H^2$-norms of~$v$ and~$\hat u$ are bounded by a number depending only on~$R$, for $t\in [k,k+1]$ we have
\begin{equation} \label{3.23}
\|\zeta(t)\|_1\le C_5(R)I_{[k+1/2,k]}(t)\|w(t)\|_2\le C_6(R)\|v(k)-\hat u(k)\|_1,
\end{equation}
where $I_{[k+1/2,k]}(t)$ is the indicator function of the interval~$[k+1/2,k]$, and we used the fact that the resolving operator for the Burgers equation is uniformly  Lipschitz continuous from any ball of~$H_0^1$ to~$H^2$ for positive times; see Remark~\ref{r3.1}. Since $v(k)-\hat u(k)=u(k)-\hat u(k)$, it follows from~\eqref{1.5} and~\eqref{3.23} that~\eqref{3.21} holds. 
This completes the proof of Theorem~\ref{t2.1}.

\subsection{Absence of global approximate controllability}
\label{s3.4}
We shall prove that if $u(t,x)$ is a solution of~\eqref{1}, \eqref{2} on the interval~$[0,T]$ with some control $\zeta\in L^2(J_T\times I)$ supported by $J_T\times[a,b]$, then the restriction of~$u(T,\cdot)$ to any closed interval included in~$[0,a)$ satisfies an a priori estimate in the~$L^\infty$ norm independent of~$u_0$ and~$\zeta$. Namely, we claim that, for any positive numbers~$T_0$ and $\delta<a$, there is~$\rho>0$ such that, if $T\ge T_0$, $u_0\in L^2(I)$, and $\zeta\in L^2(J_T\times I)$ vanishes on~$J_T\times (0,a)$, then
\begin{equation} \label{3.51}
\|R_T(u_0,h+\zeta)\|_{L^\infty(K_\delta)}\le\rho,
\end{equation}
where $K_\delta=[0,\delta]$. If this is proved, then for any $R>0$ we can take~$\hat u\in L^2(I)$ such that
\begin{equation*} 
\hat u(x)\ge\rho+\delta^{1/2}R\quad\mbox{for $x\in(0,\delta)$},
\end{equation*}
and it is straightforward to check that 
$$
\|R_T(u_0,h+\zeta)-\hat u\|^2\ge\int_0^{\delta}|R_T(u_0,h+\zeta)-\hat u|^2\dd x\ge R^2.
$$

We now prove~\eqref{3.51}. In view of the regularising property of the resolving operator (see Proposition~\ref{p3.1} and the remark following it), there is no loss of generality in assuming that $u_0\in V$. In this case, if $\zeta\in L^2(J_T\times I)$, then $u(t,x)$ is continuous on~$J_T\times\bar I$. Given $\e\in(0,1)$, we fix a number~$A_\e$ (which will be chosen below) and define the function
$$
u_\e(t,x)=\frac{A_\e}{(t+\e)(a-x+\e)}.
$$
We claim that, for an appropriate choice of~$A_\e$, the function~$u_\e$ is a super-solution for~\eqref{1} in the domain $J_T\times K_a$. Indeed, let
$$
L=\max_{x\in I_a}|u_0(x)|, \quad N=\max_{t\in J_T}|u(t,a)|, \quad 
A_\e=\Lambda+\e\bigl(L(a+\e)+N(T+\e)\bigr),
$$
where $\Lambda>0$ is a large parameter that will be chosen below. For $x\in K_a$ and $t\in J_T$, we have
\begin{equation} \label{3.55}
u_\e(0,x)\ge\frac{A_\e}{\e(a+\e)}\ge L, \quad u_\e(t,0)\ge 0, \quad 
u_\e(t,a)\ge\frac{A_\e}{\e(T+\e)}\ge N. 
\end{equation}
Furthermore, a simple calculation shows that
\begin{align}
\p_tu_\e-\nu\p_x^2u_\e+u_\e\p_xu_\e
&=\frac{A_\e}{(t+\e)^2(a-x+\e)^3}\bigl(-(a-x+\e)^2-2\nu(t+\e)+A_\e\bigr)\notag\\
&\ge \frac{A_\e^2}{2(T+1)^2(a+1)^3},  \label{3.52}
\end{align}
provided that 
\begin{equation} \label{3.53}
\Lambda\ge 4\nu(T+1)+2(a+1)^2
\end{equation}
It follows from~\eqref{3.52} that if 
\begin{equation} \label{3.54}
A_\e^2\ge2(T+1)^2(a+1)^3\|h\|_{L^\infty},
\end{equation}
then~$u_\e$ is a super-solution for~\eqref{1} on the domain~$J_T\times K_a$. Inequalities~\eqref{3.53} and~\eqref{3.54} will be satisfied if we choose
$\Lambda=C(T+1)(\|h\|_{L^\infty}^{1/2}+1)$, 
where~$C>0$ is sufficiently large and depends only on~$a$ and~$\nu$. Recalling the definition of~$u_\e$, we see that the function
$$
u_\e(t,x)=\frac{C(T+1)(\|h\|_{L^\infty}^{1/2}+1)+\e\bigl(L(a+\e)+N(T+\e)\bigr)}{(t+\e)(a-x+\e)}
$$
is a super-solution for~\eqref{1} on the domain $J_T\times K_a$. It follows from~\eqref{3.55} that Proposition~\ref{p3.2} is applicable to the pair $(u_\e,u)$. In particular, we can conclude that $u(T,x)\le u_\e(T,x)$ for $x\in K_\delta$. Passing to the limit as $\e\to0$, we obtain
$$
u(T,x)\le \frac{C(T+1)(\|h\|_{L^\infty}^{1/2}+1)}{T(a-\delta)}
\quad\mbox{for $x\in K_\delta$}. 
$$
This implies the required inequality~\eqref{3.51} in which 
$$
\rho=C(a-\delta)^{-1}(1+T_0^{-1})(\|h\|_{L^\infty}^{1/2}+1). 
$$
We have thus established assertion~(b) of the Main Theorem of the Introduction.

\section{Appendix: proofs of some auxiliary assertions}

\subsection{Proof of Proposition~\ref{p3.1}}
\label{A1}
The existence and uniqueness of a solution $u\in\XX$ is well known in more complicated situations; see Chapter~15 in~\cite{taylor1996}. We thus confine ourselves to outlining the proofs of the $L^\infty$~bound and regularity. 

\smallskip
The solution~$u(t,x)$ of~\eqref{1}, \eqref{2} can be regarded as the solution of the linear parabolic equation
\begin{equation} \label{A.1}
\p_tu-\nu\p_x^2u+b(t,x)\p_xu=h(t,x),
\end{equation}
where $b\in L_{\mathrm{loc}}^2(\R_+,H_0^1)$ coincides with~$u$. If $b$, $h$, and~$u_0$ were regular functions, then the classical maximum principle would imply that (see Section~3.2 in~\cite{landis1998})
\begin{equation} \label{A.2}
|u(t,x)|\le \|u_0\|_{L^\infty}+t\,\|h\|_{L^\infty(J_t\times I)}\quad
\mbox{for all $(t,x)\in \R_+\times I$}.
\end{equation}
To deal with the general case, it suffices to approximate~$u_0$ and~$h$ by smooth functions and to pass to the (weak) limit in inequality~\eqref{A.2} written for approximate solutions. This argument shows that the inequality in~\eqref{A.2} is valid almost everywhere for any solution~$u$. 

\smallskip
We now turn to the regularity of solutions. The function~$u\in\XX$ is the solution of the linear equation
$$
\p_tu-\nu\p_x^2u=f(t,x),
$$
where the right-hand side $f=h-u\p_xu$ belongs to $L_{\mathrm{loc}}^2(\R_+,L^2)$. By standard estimates for the heat equation, we see that 
\begin{equation} \label{A.4}
u\in L_{\mathrm{loc}}^2(\R_+,H^2)\cap W_{\mathrm{loc}}^{1,2}(\R_+,L^2). 
\end{equation}
Differentiating~\eqref{1} with respect to time and setting $v=\p_tu$, we see that~$v$ satisfies the equations
\begin{equation} \label{A.3}
\p_tv-\nu\p_x^2v+v\p_xu+u\p_xv=\p_th, \quad v(0)=v_0,
\end{equation}
where $v_0=h(0)-u_0\p_xu_0+\nu\p_x^2u_0\in L^2$. Taking the scalar product of the first equation in~\eqref{A.3} and carrying out some simple transformations, we conclude that $v\in\XX$. On the other hand, it follows from~\eqref{1} that 
$$
\p_x^2u=v+u\p_xu-h\in L_{\mathrm{loc}}^2(\R_+,H^1),
$$
whence we see that $u\in L_{\mathrm{loc}}^2(\R_+,H^3)$. Combining this  with the inclusion $\p_tu\in\XX$, we obtain~\eqref{3.1}. 

\subsection{Proof of Proposition~\ref{p3.2}}
\label{A2}
Without loss of generality, we can assume that $t=T$. Define 
$$
u=u^--u^+,\quad \psi_\delta(z)=1\wedge\bigl((z/\delta)\vee 0\bigr),
$$
where $\delta>0$ is a small parameter, and $a\wedge b$ ($a\vee b$) denotes the minimum (respectively, maximum) of the real numbers~$a$ and~$b$. In view of inequality~\eqref{2.2} and its analogue for sub-solutions, the function~$u$ is non-positive almost everywhere for $t=0$ and satisfies the inequality
\begin{equation} \label{A.5}
\int_0^T(\p_tu,\varphi)\,\dd t
+\nu\int_0^T(\p_xu,\p_x\varphi)\,\dd t
-\frac12\int_0^T(w,\p_x\varphi)\dd t\le 0,
\end{equation}
where $w=(u^-)^2-(u^+)^2$, and $\varphi\in L^\infty(J_T,L^2)\cap L^2(J_T,H_0^1)$ is an arbitrary non-negative function. Let us take $\varphi(t,x)=\psi_\delta(u(t,x))$ in~\eqref{A.5}. It is easy to check that 
\begin{align*}
\int_0^T(\p_tu,\varphi)\,\dd t &=\int_I\Psi_\delta(u(T))\,\dd x,\\
\int_0^T(\p_xu,\p_x\varphi)\,\dd t&=\int_0^T\int_I|\p_xu|^2\psi_\delta'(u)\,\dd x\dd t
\ge0,\\
\biggl|\int_0^T(w,\p_x\varphi)\dd t\biggr|
&\le \int_0^T\int_I|u|\,|u^++u^-|\,|\p_xu|\psi_\delta'(u)\,\dd x\dd t\\
&\le \int_0^T\int_I\Bigl(\nu|\p_xu|^2+\frac{1}{4\nu}|u|^2\,|u^++u^-|^2\Bigr)
\psi_\delta'(u)\,\dd x\dd t,
\end{align*}
where $\Psi_\delta(z)=\int_0^z\psi_\delta(r)\dd r$. Substituting these relations into~\eqref{A.5}, we derive
\begin{align*}
\int_I\Psi_\delta(u(T))\,\dd x
&\le \frac{1}{8\nu}\int_0^T\int_I|u|^2\,|u^++u^-|^2\psi_\delta'(u)\,\dd x\dd t\\
&\le \frac{\delta}{8\nu}\int_0^T\int_I|u^++u^-|^2\,\dd x\dd t
\le \frac{\delta}{8\nu}\,\bigl\|u^++u^-\bigr\|_{L^2(J_T\times I)}^2,
\end{align*}
where we used the fact that $0\le u\le \delta$ on the support of $\psi_\delta'(u)$.
Passing to the limit as $\delta\to0^+$, we derive 
$$
\int_I\bigl(u(T)\vee0\bigr)\,\dd x\le 0.
$$ 
This inequality implies that $u(T,x)\le0$ for a.e.~$x\in I$, which is equivalent to~\eqref{2.3}.

\subsection{Proof of Proposition~\ref{p3.3}}
\label{A3}
We apply an argument similar to that used in the proof of Lemma~\ref{l3.1}; see Section~\ref{s3.2}. 
Let us note that the difference $w=u-v\in\XX$ satisfies the linear equation~\eqref{A.31}, in which $a=\frac12(u+v)$. Along with~\eqref{A.31}, let us consider the dual equation~\eqref{A.32}. 
The following result is a particular case of the classical maximum principle. Its proof is given in Section~III.2 of~\cite{landis1998} for regular functions~$a(t,x)$ and can be obtained by a simple approximation argument in the general case. 

\begin{lemma} \label{lA.1}
Let $a\in L^2(J_T,H^1)$ for some  $T>0$. Then, for any $z_0\in L^2(I)$, problem~\eqref{A.32}, \eqref{3.17} has a unique solution $z\in\XX_T$. Moreover, if~$z_0\in L^\infty(I)$, then~$z(t)$ belongs to~$L^\infty(I)$ for any $t\in J_T$ and satisfies the inequality
\begin{equation} \label{A.33}
\|z(t)\|_{L^\infty}\le \|z_0\|_{L^\infty}\quad\mbox{for $t\in J_T$}. 
\end{equation}
\end{lemma}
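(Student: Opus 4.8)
The plan is to reverse time so that~\eqref{A.32} becomes a forward parabolic problem. Put $\tilde z(\tau,x)=z(T-\tau,x)$ and $\tilde a(\tau,x)=a(T-\tau,x)$; then, together with the homogeneous Dirichlet condition on~$\p I$, problem~\eqref{A.32}, \eqref{3.17} is equivalent to
\begin{equation} \label{eq:rev}
\p_\tau\tilde z-\nu\p_x^2\tilde z-\tilde a(\tau,x)\,\p_x\tilde z=0,\qquad \tilde z(0,\cdot)=z_0 .
\end{equation}
Since $I$ is an interval, the embedding $H^1(I)\subset L^\infty(I)$ holds, so $\tilde a\in L^2(J_T,L^\infty(I))$ and, in particular, the function $\tau\mapsto\|\tilde a(\tau)\|_{L^\infty}^2$ belongs to~$L^1(J_T)$.

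For existence and uniqueness in~$\XX_T$ I would run a Faedo--Galerkin scheme based on the eigenfunctions of the Dirichlet Laplacian on~$I$. The only term not completely routine is the drift~$\tilde a\,\p_x\tilde z$, but it is harmless because
$$
\bigl|(\tilde a\,\p_x\tilde z,\tilde z)\bigr|\le\|\tilde a\|_{L^\infty}\|\p_x\tilde z\|\,\|\tilde z\|
\le\tfrac\nu2\|\p_x\tilde z\|^2+\tfrac1{2\nu}\|\tilde a\|_{L^\infty}^2\|\tilde z\|^2 ;
$$
combined with Gronwall's inequality and the integrability of $\|\tilde a(\cdot)\|_{L^\infty}^2$, the resulting energy estimate bounds the Galerkin approximations in $L^\infty(J_T,L^2)\cap L^2(J_T,H_0^1)$. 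Writing the drift in divergence form, $\tilde a\,\p_x\tilde z=\p_x(\tilde a\tilde z)-(\p_x\tilde a)\tilde z$, and comparing~\eqref{eq:rev} then bounds $\p_\tau\tilde z$ in $L^2(J_T,H^{-1})$, and a weak-compactness argument produces a solution $\tilde z\in\XX_T$. Uniqueness follows from linearity: the difference of two solutions solves~\eqref{eq:rev} with zero initial data, and the displayed bound together with Gronwall forces it to vanish.

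It remains to prove~\eqref{A.33}, which is the only substantive point. Let $z_0\in L^\infty(I)$ and set $M=\|z_0\|_{L^\infty}$. Testing~\eqref{eq:rev} with the non-negative function $\varphi=(\tilde z-M)_+$ --- which lies in $L^\infty(J_T,L^2)\cap L^2(J_T,H_0^1)$ because $\tilde z$ vanishes on~$\p I$ while $M\ge0$ --- and using $\p_x\tilde z\cdot\varphi=\p_x\varphi\cdot\varphi$ a.e., one obtains, arguing exactly as in the proof of Proposition~\ref{p3.2} (the truncation is regularised by a function~$\psi_\delta$ of the type used there in order to justify the chain rule, which is legitimate since $\tilde z\in C(J_T,L^2)$ and $\p_\tau\tilde z\in L^2(J_T,H^{-1})$),
\begin{align*}
\tfrac12\frac{\dd}{\dd\tau}\|(\tilde z(\tau)-M)_+\|^2+\nu\|\p_x(\tilde z(\tau)-M)_+\|^2
&=\bigl(\tilde a(\tau)\,\p_x(\tilde z-M)_+,\,(\tilde z-M)_+\bigr)\\
&\le\nu\|\p_x(\tilde z-M)_+\|^2+\tfrac1{4\nu}\|\tilde a(\tau)\|_{L^\infty}^2\|(\tilde z-M)_+\|^2 .
\end{align*}
Hence $\frac{\dd}{\dd\tau}\|(\tilde z(\tau)-M)_+\|^2\le\frac1{2\nu}\|\tilde a(\tau)\|_{L^\infty}^2\,\|(\tilde z(\tau)-M)_+\|^2$; since $(\tilde z(0)-M)_+=(z_0-M)_+=0$ and $\|\tilde a(\cdot)\|_{L^\infty}^2\in L^1(J_T)$, Gronwall's inequality gives $(\tilde z(\tau)-M)_+=0$ for every $\tau\in J_T$, i.e.\ $\tilde z(\tau,x)\le M$ a.e. Applying the same argument to~$-\tilde z$ yields $\tilde z(\tau,x)\ge-M$, whence $\|\tilde z(\tau)\|_{L^\infty}\le M$; returning to the original time variable gives~\eqref{A.33}.

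The main obstacle is precisely this last step at the regularity available: with $a$ merely in $L^2(J_T,H^1)$ one has to justify the truncation and chain-rule computation, which is done exactly as in Appendix~\ref{A2}. An alternative --- matching the remark that precedes the statement --- is to establish~\eqref{A.33} first for smooth~$a$ by the classical maximum principle of~\cite{landis1998} and then approximate $a$ by smooth functions $a_n\to a$ in $L^2(J_T,H^1)$: the corresponding solutions $z_n$ converge to~$z$ by virtue of the energy bounds above and the uniqueness just established, and the uniform bound $\|z_n(t)\|_{L^\infty}\le\|z_0\|_{L^\infty}$ survives the passage to the limit.
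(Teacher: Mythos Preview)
Your proof is correct. The paper itself does not give a detailed argument for Lemma~\ref{lA.1}: it simply remarks that the result follows from the classical maximum principle in~\cite{landis1998} for smooth coefficients~$a$, and that the general case $a\in L^2(J_T,H^1)$ is then obtained ``by a simple approximation argument.'' You recover exactly this route in your final paragraph, so in that sense you match the paper.

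Your primary argument, however, is genuinely different: rather than citing the classical maximum principle and approximating~$a$, you run a direct Stampacchia-type truncation, testing the time-reversed equation with $(\tilde z-M)_+$ and closing via Gronwall using the one-dimensional embedding $H^1(I)\subset L^\infty(I)$ to control the drift. This is more self-contained than the paper's treatment and avoids the detour through smooth coefficients; the price is that one must justify the chain rule for $\p_\tau\|(\tilde z-M)_+\|^2$ at the regularity of~$\XX_T$, which you correctly flag and handle by the same $\psi_\delta$-regularisation used in Appendix~\ref{A2}. Either approach is perfectly adequate here; yours has the advantage of being a complete proof rather than a pointer to the literature.
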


To prove~\eqref{21}, we fix $t=T$ and assume without loss of generality that $s=0$. By duality, it suffices to show that, for any $z_0\in L^\infty(I)$ with norm $\|z_0\|_{L^\infty}\le 1$, we have
\begin{equation} \label{A.34}
\int_Iw(T)z_0\,\dd x\le \|w(0)\|_{L^1}
\end{equation}
Let $z\in\XX_T$ be the solution of~\eqref{A.32}, \eqref{3.17}. Such solution exists in view of Lemma~\ref{lA.1} and the inclusion $a\in L^2(J_T,H_0^1)$, which is ensured by the regularity hypothesis for~$u$ and~$v$. It follows from~\eqref{A.31} and~\eqref{A.32} that relation~\eqref{3.18} holds. Integrating it in time, we see that
$$
\int_Iw(T)z_0\,\dd x=\int_Iw(0)z(0)\,\dd x\le \|w(0)\|_{L^1} \|z(0)\|_{L^\infty}. 
$$
Using~\eqref{A.33} with $t=0$, we arrive at the required inequality~\eqref{A.34}. 

\begin{remark} \label{r4.2}
We have proved in fact that if $w\in \XX_T$ is a solution of the linear equation~\eqref{A.31}, in which the coefficient~$a$ belongs $L^2(J_T,H^1)$ , then $\|w(t)\|_{L^1}\le \|w(s)\|_{L^1}$ for $0\le s\le t\le T$. 
\end{remark}

\addcontentsline{toc}{section}{References}
\def\cprime{$'$} \def\cprime{$'$} \def\cprime{$'$}
  \def\polhk#1{\setbox0=\hbox{#1}{\ooalign{\hidewidth
  \lower1.5ex\hbox{`}\hidewidth\crcr\unhbox0}}}
  \def\polhk#1{\setbox0=\hbox{#1}{\ooalign{\hidewidth
  \lower1.5ex\hbox{`}\hidewidth\crcr\unhbox0}}}
  \def\polhk#1{\setbox0=\hbox{#1}{\ooalign{\hidewidth
  \lower1.5ex\hbox{`}\hidewidth\crcr\unhbox0}}} \def\cprime{$'$}
  \def\polhk#1{\setbox0=\hbox{#1}{\ooalign{\hidewidth
  \lower1.5ex\hbox{`}\hidewidth\crcr\unhbox0}}} \def\cprime{$'$}
  \def\cprime{$'$} \def\cprime{$'$} \def\cprime{$'$}
\providecommand{\bysame}{\leavevmode\hbox to3em{\hrulefill}\thinspace}
\providecommand{\MR}{\relax\ifhmode\unskip\space\fi MR }
% \MRhref is called by the amsart/book/proc definition of \MR.
\providecommand{\MRhref}[2]{%
  \href{http://www.ams.org/mathscinet-getitem?mr=#1}{#2}
}
\providecommand{\href}[2]{#2}

\end{document}